\documentclass[11pt,a4paper]{amsart}

\usepackage[T1]{fontenc}
\usepackage[utf8]{inputenc}
\usepackage{lmodern}
\usepackage{amsmath,amssymb,amsthm,mathtools}
\usepackage{mathrsfs}
\usepackage[noadjust]{cite}
\usepackage{enumitem,booktabs,longtable,array,tikz-cd}
\usepackage{xcolor}
\IfFileExists{microtype.sty}{\usepackage{microtype}}{}
\usepackage[
  colorlinks=true,
  linkcolor=red,
  citecolor=blue,
  urlcolor=blue
]{hyperref}
\usepackage{bookmark}

\allowdisplaybreaks[4]

\numberwithin{equation}{section}

\setlist[itemize]{topsep=4pt,itemsep=2pt,parsep=1pt}
\setlist[enumerate]{topsep=4pt,itemsep=3pt,parsep=1pt}

\theoremstyle{plain}
\newtheorem{theorem}{Theorem}[section]
\newtheorem{proposition}[theorem]{Proposition}
\newtheorem{lemma}[theorem]{Lemma}
\newtheorem{corollary}[theorem]{Corollary}

\theoremstyle{definition}

\newtheorem{definition}[theorem]{Definition}
\newtheorem{notation}[theorem]{Notation}
\newtheorem*{convention}{Convention}
\newtheorem{recollection}[theorem]{Recollection}

\theoremstyle{remark}
\newtheorem{remark}[theorem]{Remark}

\hypersetup{
  pdftitle={Tensor Geometry of Category Representations},
  pdfauthor={Peng Xu}
}

\theoremstyle{plain}

\theoremstyle{definition}
\newtheorem{example}[theorem]{Example}

\newcommand{\op}{\mathrm{op}}
\DeclareMathOperator{\Spc}{Spc}
\DeclareMathOperator{\Spec}{Spec}
\DeclareMathOperator{\Perf}{Perf}

\DeclareMathOperator{\Loc}{Loc}

\DeclareMathOperator{\supp}{supp}

\newcommand{\RHom}{\operatorname{RHom}}

\makeatletter
\def\l@part{\@tocline{-1}{6pt plus1pt}{0pt}{}{\bfseries}}
\makeatother

\begin{document}
\ifdefined\synctex
  \synctex=1\relax
\fi
\hypersetup{pdftitle={Tensor Triangular Geometry of BAND Algebras }}
\markboth{\rightline{\scriptsize Peng Xu}}
         {\leftline{\scriptsize TENSOR TRIANGULAR GEOMETRY OF BAND ALGEBRAS}}
\begin{center}
{\large\bfseries TENSOR TRIANGULAR GEOMETRY OF BAND ALGEBRAS\\[3pt]
}
\vspace{0.5cm}

Peng Xu\footnote{Corresponding author. Peng Xu is supported by the
National Natural Science Foundation of China (Grant No.~12231009,
11971224).}
\end{center}
\bigskip
\centerline{\bfseries Abstract}
\medskip
\begingroup
\leftskip10truemm
\rightskip10truemm
\noindent\hspace{1em}
In this paper, we study the tensor triangular geometry of band algebras $RB$, where $B$ is a finite left regular band and $R$ is a commutative Noetherian ring.  We show that the Balmer spectrum of $\Perf(RB)$ is homeomorphic to $\Spec R\times L_B$, where $L_B$ is the support lattice of $B$. We also show that there is a bijection between the localizing  ideals of  $D(RB)$ and the stable subsets of $\Spec R\times L_B$. As a consequence, the telescope conjecture holds for $D(RB)$. Finally, we construct a band $B$ such that the generalized  Nerves of Steel Conjecture fails for the non-rigid tensor triangulated category $\Perf(RB)$.\\[2mm]
{\bf Keywords:} Tensor triangular geometry; Band algebra; Telescope conjecture;Path algebra; Derived category\\
{\bf 2020 Mathematics Subject Classification:} { 18G80, 18M05, 20M25}

\par
\endgroup
\bigskip

\section{Introduction}
Tensor triangular geometry \cite{Balmer,BalmerICM} associates to a tensor triangulated category $\mathcal K$ a topological space $\Spc(\mathcal K)$, called the Balmer spectrum, with the aim of classifying thick  ideals and localizing ideals via support theory. In recent years it has attracted attention  from diverse fields, including algebraic geometry \cite{Aoki23,BG22Real,BG25Perm,BG25Artin,Gal18,Gal19,Neeman}, modular representation theory \cite{BIK11,BIK11mod,BIKP18,BBIKP25}, and algebraic topology \cite{BHNNNS19,BBPSW25,BS17}.

In this paper, our goal is to study the tensor triangular geometry of the derived category of the band algebra $RB$.  Here $R$ is a commutative Noetherian ring and $B$ is a finite left regular band, that is, a finite monoid $B$ such that \( b^2=b\) and \(bcb=bc\) for all \( b,c\in B.\) Band algebras arise naturally from combinatorial settings, such as matroids, oriented matroids, finite simple graphs, and finite acyclic quivers, see \cite{MSS}. In particular, one can associate to a finite acyclic quiver $Q$ a finite left regular band $B_Q^\prec$ such that  $RB_Q^\prec=RQ$. In \cite{Sabatini}, Sabatini has  made a systematic study of the tensor triangular geometry of $D(RQ)$ with the vertexwise tensor product. There is another tensor structure on $D(RQ)$, induced by the band structure of $B_Q^\prec$. Moreover, these two tensor structures on the  derived category $D(RQ)$ can have non-homeomorphic  Balmer spectra. The following is our first main result:

\vspace{2mm}
\noindent\textbf{Theorem 1.1} (Theorem~\ref{thm6.7}). Let $R$ be a commutative Noetherian ring and $Q=(Q_0,Q_1)$ a finite acyclic quiver. Then the  triangulated category $\Perf(RQ)$ carries tensor products with spectra
\begin{align*}
 \Spc(\Perf(RQ),\otimes_{\mathrm{band}}^{\mathbf L})
   &\cong\Spec R\times(Q_0,\prec),\\
 \Spc(\Perf(RQ),\otimes_{\mathrm{vert}}^{\mathbf L})
   &\cong\Spec R\times Q_0^{\mathrm{disc}}.
\end{align*}
Here,  $(Q_0,\prec)$ is the vertex set of $Q$ equipped with a total order satisfying $s(\alpha)\prec t(\alpha)$ for every arrow $\alpha$. It carries the topology whose closed subsets are precisely the downward-closed subsets. More generally, for each finite left regular band $B$, one has a support lattice \(L_B=\{Bb:b\in B\}\) which is ordered
by reverse inclusion. We always endow $L_B$  with the topology whose closed subsets are precisely the downward-closed subsets. This space determines the Balmer spectrum of $\Perf(RB)$.

\vspace{2mm}
\noindent\textbf{Theorem 1.2} (Theorem~\ref{thm3.7}). Let $R$ be a commutative Noetherian ring and $B$ a finite left regular band. Then there is a homeomorphism
$\Spec R\times L_B\xrightarrow{\sim}\Spc\Perf(RB)$.

\vspace{2mm}
For a rigidly-compactly generated tensor triangulated category $\mathcal J$, stratification provides a powerful framework for studying localizing tensor ideals \cite{BHS23,Ste13,BIK11}. Less is known when $\mathcal J^c$ is not rigid. In this setting, the classification of localizing tensor ideals has been studied for derived categories of global representations associated with certain families of finite groups \cite{XuTelescope}, as well as for derived categories of noetherian path algebras \cite{Sabatini}. For a commutative Noetherian ring $R$ and a nontrivial finite left regular band $B$, the derived category $D(RB)$ is compactly generated while its compact subcategory $\Perf(RB)$ is not rigid. We introduce  tensor support (see Definition~\ref{def3.1}) and  stable subsets of $\Spec R\times L_B$ (see Definition~\ref{def3.5}).  Then we show that the localizing ideals of $D(RB)$ are precisely controlled by the stable subsets of $\Spec R\times L_B$:

\vspace{2mm}
\noindent\textbf{Theorem 1.3} (Theorem~\ref{thm3.6}). For a commutative Noetherian ring $R$ and a finite left regular band $B$, there are mutually inverse bijections
\[
\left\{\text{localizing ideals of }D(RB)\right\}
\underset{\Psi}{\overset{\Phi}{\rightleftarrows}}
\left\{W\subseteq\Spec R\times L_B:W\text{ is stable}\right\},
\]
given by
\(
\Phi(\mathcal J)=\bigcup_{X\in\mathcal J}\operatorname{Supp}_B(X)\), \(\Psi(W)=\{X:\operatorname{Supp}_B(X)\subseteq W\}=\Loc\langle\kappa(\mathfrak p)_F:(\mathfrak p,F)\in W\rangle.\)
\vspace{2mm}

Under suitable hypotheses on its Balmer spectrum, stratification of a rigidly-compactly generated tensor triangulated category $\mathcal J$ implies the telescope conjecture, i.e., every smashing ideal of $\mathcal J$ is compactly generated as a localizing subcategory. This conjecture can be generalized to the non-rigid setting by asking for the same property in compactly generated tensor triangulated categories. We establish this for $D(RB)$:

\vspace{2mm}
\noindent\textbf{Theorem 1.4} (Theorem~\ref{thm5.3}). Let $R$ be a commutative Noetherian ring and $B$ a finite left regular band. Suppose $W\subseteq\Spec R\times L_B$ is stable. Then the following  are equivalent:
\begin{enumerate}[label=\textup{(\roman*)}]
\item $\Psi(W)$ is smashing;
\item $W$ is specialization-closed in $\Spec R\times L_B$;
\item $\Psi(W)$ is generated by compact objects
      as a localizing subcategory.
\end{enumerate}
In particular, $D(RB)$ satisfies the telescope conjecture.
\vspace{2mm}

Balmer's Nerves of Steel Conjecture states that, for a rigid tensor triangulated category $\mathcal{K}$, the natural comparison map \(\Spc^{\mathrm h}(\mathcal{K})\rightarrow \Spc( \mathcal{K})\) is always a homeomorphism. It is possible to generalize this to the non-rigid setting. While the counterexamples to this conjecture have been given in \cite{BHR26, Coulembier26, Hyslop}, we construct a band $B$ such that the generalized Nerves of Steel Conjecture fails for $\Perf(RB)$, which might be of independent interest.

\vspace{2mm}

\noindent\textbf{Structure of the paper.} In Section~\ref{sec2}, we recall some notions from tensor triangular geometry and present some basic properties of the derived categories of band algebras. In Section~\ref{sec3}, we compute the Balmer spectrum of $\Perf(RB)$ and classify all localizing ideals of $D(RB)$. We also construct a band algebra for which the generalized Nerves of Steel Conjecture fails. In Section~\ref{sec4}, we prove that the tensor abelian spectrum of $\mathrm{mod}(RB)$ coincides with $\Spc\Perf(RB)$.  In Section~\ref{sec5}, we prove that the
telescope conjecture holds for $D(RB)$. Finally, in Section~\ref{sec6}, we study several examples of band algebras arising from various combinatorial settings.

\vspace{2mm}

\begin{convention}Throughout the paper every monoid is finite and $R$ is a  commutative Noetherian ring. All modules are unital left modules. For a ring $A$, the symbol $D(A)$ denotes the unbounded derived category of left $A$-modules, and $\Perf(A)$ its full subcategory of objects isomorphic to bounded complexes of finitely generated
projective modules. For $C\in D(R)$, write $\supp_R(C)$ for its Balmer--Favi support.
\end{convention}

\vspace{2mm}

\noindent\textbf{Declaration of generative AI use.}
The author used  OpenAI's GPT6 as an exploratory and editorial aid in formulating several of the questions, exploring possible approaches, and assisting with typesetting
and manuscript preparation. The mathematical content, verification of the arguments, and all final decisions concerning the mathematics and its presentation are the author's own.

\section{Preliminaries}\label{sec2}
In this section, we recall some notions from tensor triangular geometry to fix our notation. We then present some basic properties of the derived categories of band algebras.
\subsection{Tensor triangulated categories}
We assume some familiarity with the basic notions of tensor triangular geometry. See for example \cite{Balmer, BalmerICM}.
\begin{recollection} Let $\mathcal{J}$ be a compactly generated triangulated category with compact part $\mathcal{J}^{c}$. A thick subcategory $\mathcal{S}$ is called localizing if it is closed under coproducts.  A localizing subcategory $\mathcal{S}$ is called a Bousfield subcategory if the Verdier quotient $q:\mathcal{J}\rightarrow \mathcal{J}/\mathcal{S}$ has a right adjoint. Moreover, if $\mathcal{S}^{\bot}$ is localizing, $\mathcal{S}$ is called a smashing subcategory. Here $\mathcal{S}^{\bot}:=\{Y\in \mathcal{J}~|~\mathrm{Hom}_{\mathcal{J}}(X,Y)=0, \forall X\in \mathcal{S}\}$.\\

A compactly generated tensor triangulated category $(\mathcal{J},\otimes,1)$ is a compactly generated triangulated category $\mathcal{J}$ with a tensor product $\otimes:\mathcal{J}\times \mathcal{J}\rightarrow \mathcal{J}$ which is a triangulated functor in each variable and also makes $\mathcal{J}$ symmetric monoidal with unit $1$. We require $\otimes$ to preserve coproducts. A subcategory $\mathcal{S}$ is called a tensor ideal if $X\otimes Y\in \mathcal{S}$ for $X\in \mathcal{S}$, $Y\in \mathcal{J}$. A localizing subcategory is called a localizing  ideal if it is a tensor ideal. Similarly, a smashing subcategory is called a smashing  ideal if it is a tensor ideal. Let $\mathcal{C}$ be a collection of objects in $\mathcal{J}$. We denote by $\mathrm{Loc}\langle\mathcal{C}\rangle$ the localizing subcategory generated by $\mathcal{C}$, and by $\mathrm{Loc}_{\otimes}\langle\mathcal{C}\rangle$ the localizing ideal generated by $\mathcal{C}$. If $\mathcal{C}$ is a collection of compact objects, we denote by $\mathrm{thick}\langle\mathcal{C}\rangle$ the thick subcategory generated by $\mathcal{C}$, and by $\mathrm{thick}_{\otimes}\langle\mathcal{C}\rangle$ the thick ideal generated by $\mathcal{C}$.
\end{recollection}

\begin{definition}[{\cite[Definition~2.15]{XuTelescope}}] Let $\mathcal{J}$ be a compactly generated tensor triangulated category. We say that $\mathcal{J}$ satisfies the telescope conjecture if every smashing ideal of $\mathcal{J}$ is compactly generated.
\end{definition}

\begin{remark}Our definition of the telescope conjecture coincides with \cite[Definition 9.1]{BHS23} when $\mathcal{J}$ is  a rigidly-compactly generated tensor triangulated category in the sense of \cite{BF11}. In the non-rigid setting, our formulation should be distinguished from Liang's generalized telescope conjecture \cite[Definition~4.8]{Liang}.
\end{remark}

\subsection{Band algebras}
Left regular bands arise naturally in algebraic combinatorics \cite{MSS}. Throughout this paper, all left regular bands are assumed to be finite. From now on, let $R$ be a commutative Noetherian ring. We will study the derived categories of band algebras over $R$.

\begin{recollection}\label{rec2.4}
A \emph{  left regular band} is a  monoid $B$ satisfying \( b^2=b\) and \(bcb=bc\) for all \( b,c\in B.\) Let $B$ be a  left regular band. Its \emph{support lattice}
is the finite set \(L_B=\{Bb:b\in B\},\) which is ordered by reverse inclusion. The \emph{support map}
\[
\sigma:B\longrightarrow L_B,\qquad b\longmapsto Bb
\]
is surjective, and $\sigma(b)$ is called the \emph{support} of $b$.
For all $b,c\in B$, one has
\[
\sigma(bc)=\sigma(b)\vee\sigma(c),\qquad
bc=b\ \Longleftrightarrow\ \sigma(c)\leq\sigma(b).
\]
It follows that \(\sigma(b)=\sigma(c)\) if and only if \(bc=b\ \text{and}\ cb=c.\) See  \cite[Section~2.2]{MSS} for more details.
\end{recollection}

Write $RB$ for the monoid algebra of $B$ over $R$, which we call the band algebra of $B$ over $R$. It is the free $R$-module
\[
 RB=\bigoplus_{b\in B}R\,b,
\]
with operations
\[
 \begin{aligned}
 \left(\sum_{b\in B}r_b b\right)
 +\left(\sum_{b\in B}s_b b\right)
 &=\sum_{b\in B}(r_b+s_b)b,\\
 \left(\sum_{b\in B}r_b b\right)
 \left(\sum_{c\in B}s_c c\right)
 &=\sum_{b,c\in B}r_b s_c\,(bc).
 \end{aligned}
\]
Tensor products of $RB$-modules are taken over $R$, with diagonal $B$-action. The tensor unit is $R$ with the trivial $B$-action. Since $RB$ is finite free over
$R$, it is left Noetherian, and an $RB$-module is finitely generated over $RB$ if and only if it is finitely generated over $R$.

\begin{notation}
 For $F\in L_B$, write $\mathord\downarrow F=\{G:G\leq F\}$ and $\mathord\uparrow F=\{G:F\leq G\}$. We equip $L_B$ with the topology whose closed subsets are
precisely the downward-closed subsets.
\end{notation}

\begin{remark} For $(\mathfrak p,F)\in\Spec R\times L_{B}$, we have \(\overline{\{(\mathfrak p,F)\}}=V(\mathfrak p)\times\mathord\downarrow F.\)
Every closed subset $Z$ of $\Spec R\times L_{B}$ is of the form
$Z=\bigcup_{F\in L_B} C_F\times\mathord\downarrow F$ where \(C_F:=\{\mathfrak p\in\Spec R:(\mathfrak p,F)\in Z\}.\) In particular, $\Spec R\times L_{B}$ is Noetherian.
\end{remark}

\begin{notation}For $F\in L_B$, let $R_F$ be the left $RB$-module whose underlying $R$-module is $R$, with $B$-action
\[
 b\cdot r=\chi_F(b)r,\qquad\text{where}\quad
 \chi_F(b)=
 \begin{cases}
  1,&\sigma(b)\leq F,\\
  0,&\text{otherwise}.
 \end{cases}
\]
In particular, $R=R_{\max L_B}$ is the tensor unit, and $R_F\otimes_R R_G=R_{F\wedge G}$.
\end{notation}

\begin{lemma}\label{lem2.8}
Let B be a left regular band. Then there are idempotents $(e_F)_{F\in L_B}$ in $RB$ with the following properties:
\begin{enumerate}
\item They are pairwise orthogonal and satisfy
      $\sum_{F\in L_B}e_F=1$.
\item For $F,G\in L_B$, one has
      $e_G(RB)e_F=0$ unless $G\leq F$.
\item For $F\in L_B$, one has $e_F(RB)e_F=Re_F$.
\item For $F,G\in L_B$, one has
      $\chi_G(e_F)=\delta_{F,G}$.
\end{enumerate}
\end{lemma}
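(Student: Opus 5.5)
We follow the classical construction of a complete system of primitive orthogonal idempotents for left regular bands (Brown, Saliola), which works over $\mathbb Z$ and hence over any $R$.

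\emph{Setup and basic facts.} Since $\sigma(1)=B$ is the least element of $L_B$, the element $1$ has bottom support. For each $F\in L_B$ fix $b_F\in B$ with $\sigma(b_F)=F$, and take $b_{\min L_B}=1$. The maps $\chi_H:RB\to R$ are ring homomorphisms, because $\sigma(bc)=\sigma(b)\vee\sigma(c)\le H$ iff both $\sigma(b)\le H$ and $\sigma(c)\le H$. Define the $e_F$ by downward recursion on $L_B$:
\[
 e_F \;=\; b_F\Bigl(1-\sum_{G>F}e_G\Bigr).
\]
By induction, every $e_F$ is an $R$-combination of elements $z\in B$ with $\sigma(z)\ge F$. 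Property (4) follows directly by applying $\chi_H$ to the recursion, again by downward induction:
\[
 \chi_H(e_F)=[F\le H]\Bigl(1-\sum_{G>F}\chi_H(e_G)\Bigr)=[F\le H]\bigl(1-[H>F]\bigr)=\delta_{F,H}.
\]
The relation $\sum_F e_F=1$ is exactly the recursion at $F=\min L_B$, since $b_{\min}=1$.

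\emph{Key lemma and orthogonality.} The key lemma is:
\begin{center}
(a) for $x\in B$, $x e_F=0$ unless $\sigma(x)\le F$.
\end{center}
Granting (a), we argue by downward induction on $F$.
\begin{itemize}
\item \emph{Idempotence.} We have $e_F e_F=b_F e_F-\sum_{G>F}b_F e_G e_F$. Each $e_G$ with $G>F$ is spanned by $z$ with $\sigma(z)\ge G$, so $\sigma(z)\not\le F$ and $e_G e_F=0$ by (a). Hence $e_F e_F=b_F e_F=e_F$, using $b_F^2=b_F$.
\item \emph{Orthogonality.} If $G\not\le F$, then $e_G e_F=0$ by (a), as in the previous step. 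If $G<F$, the inductive hypothesis gives $e_K e_F=\delta_{K,F}e_F$ for $K>G$, so $e_G e_F=b_G e_F-b_G e_F=0$.
\end{itemize}

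\emph{Properties (2) and (3).} Since $e_F=b_F e_F$, the left ideal $RB\,e_F$ is spanned by the elements $y e_F$ with $y=xb_F$. By (a) we may restrict to $\sigma(x)\le F$, and then $\sigma(y)=F$. For (2): $e_G y$ is a combination of $zy$ with $\sigma(zy)\ge G$, and $zy\,e_F\neq0$ forces $\sigma(zy)\le F$, so $G\le F$. For (3): if $\sigma(z)\ge F=\sigma(y)$ then $zy=z$ by Recollection~\ref{rec2.4}, so $e_F y=e_F$ and hence $e_F\,y\,e_F=e_F$. Thus $e_F(RB)e_F=Re_F$. This module is free of rank one because $\chi_F(e_F)=1$.

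\emph{Main obstacle: proving (a).} Write $x e_F=y(1-\sum_{K>F}e_K)$ with $y=xb_F$. If $\sigma(x)\not\le F$, then $H:=\sigma(y)=\sigma(x)\vee F>F$. By induction, $y e_K=0$ unless $H\le K$. It therefore suffices to prove the auxiliary identity
\[
 y\sum_{K\ge\sigma(y)}e_K=y\qquad\text{for all }y\in B.
\]
I would prove this jointly with (a), by downward induction on $\sigma(y)$. The first approach I would try is to apply the construction to the left regular band $yBy$: its identity is $y$, and its support lattice is $\uparrow\sigma(y)$. I would then check that the recursion restricted to $K\ge\sigma(y)$ reproduces a complete family for that subband, after multiplying on the left by $y$. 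Making this bookkeeping precise is where I expect the real work to lie.
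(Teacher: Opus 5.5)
\paragraph{Where the proposal falls short.} Your recursive construction is exactly the paper's; the paper's proof simply defers the verification to \cite[Theorem~5.1]{MSS} and the proof of \cite[Theorem~5.3]{MSS}. Everything you derive \emph{from} the key lemma (a) is correct.

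The gap is that (a) itself is never proved. You correctly reduce it to the identity $y\sum_{K\geq\sigma(y)}e_K=y$. You then only sketch a passage to the subband $yBy$ and explicitly leave that ``bookkeeping'' open. Idempotence, orthogonality, (2) and (3) all rest on (a). So the proposal as written establishes only (4) and $\sum_F e_F=1$.

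\paragraph{The missing step.} It takes one line and needs neither a joint induction nor the subband. Put $H=\sigma(y)$. Because $\sigma(b_H)=H\leq\sigma(y)$, Recollection~\ref{rec2.4} gives $yb_H=y$. Multiplying the defining recursion for $e_H$ on the left by $y$ then gives
\[
 ye_H=yb_H\Bigl(1-\sum_{K>H}e_K\Bigr)=y-\sum_{K>H}ye_K ,
\]
which is precisely $y\sum_{K\geq H}e_K=y$.

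Your downward induction for (a) then closes. Suppose $\sigma(x)\nleq F$ and set $y=xb_F$, so $H=\sigma(y)>F$. The inductive hypothesis kills $ye_K$ for every $K>F$ with $K\ngeq H$. Hence $xe_F=y-\sum_{K\geq H}ye_K=0$.

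Your $yBy$ idea also works. One checks inductively that the $ye_K$ for $K\geq\sigma(y)$ satisfy the recursion for the monoid $yB=yBy$, with identity $y$ and representatives $yb_K$. But completeness in that monoid comes down to the same computation.

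\paragraph{A smaller repair.} In the orthogonality step for $G<F$, the induction must run downward on $G$ with $F$ fixed. Split $K>G$ into three cases: $K=F$, $K\nleq F$ (use (a)), and $G<K<F$ (use the inductive hypothesis). A downward induction on $F$ does not give you $e_Ke_F$.

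With these two points supplied, your argument is a complete, self-contained proof of the result the paper only cites.
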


\begin{proof}
For each $F\in L_B$, fix $b_F\in B$ with $\sigma(b_F)=F$. Define $e_F\in RB$ recursively by
\[
e_F=b_F\left(1-\sum_{\substack{G\in L_B\\G>F}}e_G\right),
\]
where all $e_G$ with $G>F$ are defined before $e_F$. In particular, the empty sum gives $e_{\max L_B}=b_{\max L_B}$. Then these assertions follow from \cite[Theorem 5.1]{MSS} and the proof of \cite[Theorem 5.3]{MSS}.
\end{proof}

\begin{remark}\label{rem2.9}
For $F\in L_B$, the functor
\[
e_F(-):\operatorname{Mod}(RB)\longrightarrow\operatorname{Mod}(R),
\qquad M\longmapsto e_FM,
\]
is exact and preserves coproducts. Indeed, it is naturally isomorphic to $\operatorname{Hom}_{RB}((RB)e_F,-)$, and $(RB)e_F$ is finitely generated projective over $RB$. For any commutative $R$-algebra $S$, let $e_F^S$ denote the image of $e_F$ in $SB$. The split decomposition $M=e_FM\oplus(1-e_F)M$ gives a natural isomorphism
\(S\otimes_R e_FM\cong e_F^S(S\otimes_R M).\) Thus $e_F(-)$ commutes with extension of scalars. Applying $e_F(-)$ degreewise defines a functor on complexes, which we denote by the same symbol. This functor has the same properties.
\end{remark}

\begin{proposition}\label{prop2.10}
Let B be a left regular band and $F\in L_B$. Then $R_F$ admits a bounded resolution by finitely generated projective $RB$-modules.
\end{proposition}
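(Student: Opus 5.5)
Using Lemma~\ref{lem2.8}, $RB$ is a ``triangular'' algebra: the projectives $P_F:=(RB)e_F$ satisfy $P_F=\bigoplus_{G} e_G(RB)e_F$ as $R$-modules, with $e_G(RB)e_F\neq 0$ only for $G\le F$ and $e_F(RB)e_F=Re_F$. I would argue by downward induction on $F\in L_B$ (from $\max L_B$ downwards; $L_B$ is finite) that $R_F$ lies in $\mathrm{thick}\langle P_G : G\in L_B\rangle$, which is exactly the statement that $R_F$ has a bounded resolution by finitely generated projective $RB$-modules.

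\textbf{Step 1 (the natural map $P_F\to R_F$).} Sending $e_F\mapsto 1$ gives a map $P_F=(RB)e_F\to R_F$, $xe_F\mapsto x\cdot 1$. Since $e_F$ is idempotent and $e_F\cdot 1=\chi_F(e_F)=1$ by Lemma~\ref{lem2.8}(4), this is well defined and surjective. Let $K_F$ be its kernel; it is finitely generated over $R$, hence over $RB$. Applying the exact functors $e_G(-)$ of Remark~\ref{rem2.9}: $e_GR_F=\chi_F(e_G)R=\delta_{F,G}R$ (using $\chi_F$ being a ring map $RB\to R$ and (4)), while $e_GP_F=e_G(RB)e_F$. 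Hence $e_GK_F=e_G(RB)e_F$ for $G\ne F$, which vanishes unless $G<F$, and $e_FK_F=\ker(Re_F\to R)=0$. So $K_F$ is \emph{supported strictly below $F$}: $e_GK_F=0$ unless $G<F$.

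\textbf{Step 2 (modules supported below $F$).} The key claim is: every finitely generated $RB$-module $M$ with $e_GM\neq0$ only for $G$ in a down-set $D$ lies in $\mathrm{thick}\langle P_G\rangle$. This is wrong in the direction I set up, so I would instead do upward induction on $F$ (from minimal elements) combined with a filtration by $e_G$-pieces. I would pick $G$ maximal in $D$; then $e_GM$ is an $R$-module and, by Lemma~\ref{lem2.8}(2), $(RB)e_GM$… Cleaner: show $M$ has a finite filtration whose subquotients are of the form $R_G\otimes_R N$ with $N$ a finitely generated $R$-module. For $G$ minimal in the support of $M$, $e_GM$ is a submodule: $b e_G m$ has $e_{H}be_G m$ nonzero only for $H\le G$, and such $H$ lie outside the support, so $be_Gm\in e_GM$; moreover $B$ acts on it by $\chi_G$ (checking that $b$ acts by $\chi_G(b)$ on $e_GM$ via (3)). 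So $e_GM\cong R_G\otimes_R e_GM$ is a submodule, and the quotient has strictly smaller support. Thus by induction on support size, $M$ is an iterated extension of modules $R_G\otimes_R N$.

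\textbf{Step 3 (closing the induction).} Now induct upward on $F$: assume $R_G\otimes_R N\in\mathrm{thick}\langle P_H\rangle$ for all $G<F$ and all finitely generated $R$-modules $N$. Using Step~1 tensored with $N$ (the sequence $0\to K_F\to P_F\to R_F\to0$ is $R$-split since $R_F\cong R$ is free, so it stays exact after $-\otimes_R N$, and $P_F\otimes_RN$... ) — here is the main obstacle: $N$ need not have finite projective dimension over $R$ (e.g.\ $R$ not regular), so $R_G\otimes_R N$ is not perfect in general. I would therefore avoid general $N$: in Step~2 applied to $K_F$, note $e_GK_F=e_G(RB)e_F$ is a direct summand of the free $R$-module $RB$, hence projective over $R$, and all successive quotients in the filtration have projective $e_G$-pieces. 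So it suffices to treat $R_G\otimes_R N$ with $N$ finitely generated projective over $R$, which is a summand of $R_G^{\oplus n}$, and the upward induction on $F$ then closes: $K_F\in\mathrm{thick}\langle P_H\rangle$ by hypothesis, so $R_F\in\mathrm{thick}\langle P_H\rangle$. The base case ($F$ minimal) has $K_F=0$, so $R_F\cong P_F$. Verifying the $B$-action on $e_GM$ is by $\chi_G$ (Step 2) is the computational check I expect to need most care.
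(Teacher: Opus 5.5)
Your final argument, after dropping the false starts about downward induction and arbitrary $N$, is correct and is essentially the paper's proof. It uses the same surjection $(RB)e_F\to R_F$, whose kernel $K_F$ has $e_GK_F=e_G(RB)e_F$ for $G<F$ and is zero otherwise, and the same filtration by $e_G$-pieces with subquotients $R_G\otimes_R e_GM$ (via $e_Gae_G=\chi_G(a)e_G$). It also runs the same induction from the bottom of $L_B$, using that $e_G(RB)e_F$ is finitely generated projective over $R$.

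The only cosmetic difference is where you close the induction. You work in $\mathrm{thick}\langle (RB)e_G\rangle\subseteq D(RB)$, which needs the standard truncation argument to turn a perfect module into an honest bounded resolution. The paper instead builds the resolution directly with the horseshoe lemma.
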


\begin{proof}
Let $(e_F)_{F\in L_B}$ be as in Lemma~\ref{lem2.8}. For $F,G\in L_B$, the module $e_G(RB)e_F$ is the image of the idempotent $R$-linear endomorphism
\[
 RB\longrightarrow RB,\qquad x\longmapsto e_Gxe_F.
\]
It is therefore a direct summand of the finite free $R$-module $RB$, hence finitely generated projective over $R$. Enumerate the elements of $L_B$ as $F_1,\ldots,F_m$ so that
each element appears before every strictly smaller element. For an $RB$-module $M$, the submodules
\[
 M_{\geq i}=\bigoplus_{j\geq i}e_{F_j}M
\]
are $RB$-submodules of $M$ by Lemma~\ref{lem2.8}. By the same lemma we also have \(e_{F_i} a e_{F_i}=\chi_{F_i}(a)e_{F_i}\) for $a\in RB$.
Hence $M_{\geq i}/M_{\geq i+1}\cong R_{F_i}\otimes_R e_{F_i}M$.

The $R$-linear extension of $\chi_{F_i}$ to $RB$ restricts to an epimorphism of $RB$-modules \((RB)e_{F_i}\twoheadrightarrow R_{F_i}.\) Its restriction to $e_{F_i}(RB)e_{F_i}=Re_{F_i}$ is an isomorphism of $R$-modules. We proceed by descending induction on $i$. The case $i=m$ follows from $R_{F_m}\cong(RB)e_{F_m}$.
Assume that the assertion holds for all $j>i$. Let $K_i=\ker((RB)e_{F_i}\to R_{F_i})$. Then $e_{F_j}K_i=0$ for $j\leq i$.
For $j>i$, the identity $\chi_{F_i}(e_{F_j})=0$ implies \(e_{F_j}K_i=e_{F_j}(RB)e_{F_i}.\) Applying the filtration above to $K_i$, its successive quotients are therefore
\[
R_{F_j}\otimes_R e_{F_j}K_i
\cong R_{F_j}\otimes_R e_{F_j}(RB)e_{F_i},
\qquad j>i.
\]
Since $e_{F_j}(RB)e_{F_i}$ is finitely generated projective over $R$, the induction hypothesis gives a bounded resolution of each quotient by finitely generated projective $RB$-modules. Successive applications of the horseshoe lemma give such a resolution of $K_i$. The short exact sequence
\[
0\longrightarrow K_i\longrightarrow(RB)e_{F_i}
\longrightarrow R_{F_i}\longrightarrow0,
\]
together with this resolution and the projectivity of $(RB)e_{F_i}$, gives the required bounded resolution of $R_{F_i}$. This completes the induction.
\end{proof}

\begin{lemma}\label{lem2.11}
Enumerate $L_B$ as $F_1,\ldots,F_m$ so that each element appears before every strictly smaller element. Then every complex $X$ of $RB$-modules has a functorial finite
filtration with successive quotients $R_{F_i}\otimes_R e_{F_i}X$ for $1\leq i\leq m$.
\end{lemma}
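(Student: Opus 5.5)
The plan is to apply the module-level filtration from the proof of Proposition~\ref{prop2.10} degreewise. For a complex $X$ of $RB$-modules set
\[
X_{\geq i}=\bigoplus_{j\geq i}e_{F_j}X ,\qquad 1\leq i\leq m+1,
\]
with $X_{\geq m+1}=0$ and $X_{\geq 1}=X$; the last equality holds by Lemma~\ref{lem2.8}(1). The work splits into three checks: each $X_{\geq i}$ is a subcomplex of $RB$-modules, the construction is functorial, and the successive quotients are as claimed.

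\emph{Subcomplex and functoriality.} Each $e_{F_j}$ acts by an $R$-linear map commuting with the differential, because the differential is $RB$-linear. Hence $e_{F_j}X$ is a subcomplex of $R$-modules, and the sum defining $X_{\geq i}$ is direct by orthogonality of the $e_F$. Any chain map $f$ of $RB$-complexes satisfies $f(e_Fx)=e_Ff(x)$, so it carries $X_{\geq i}$ into $Y_{\geq i}$; this gives functoriality.

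To see that $X_{\geq i}$ is $RB$-stable, let $a\in RB$ and $x\in e_{F_j}X$ with $j\geq i$. Write
\[
ax=\sum_k e_{F_k}ae_{F_j}x .
\]
By Lemma~\ref{lem2.8}(2), $e_{F_k}ae_{F_j}=0$ unless $F_k\leq F_j$. The enumeration puts each element before every strictly smaller one, so $F_k\leq F_j$ forces $k\geq j\geq i$. Therefore $ax\in X_{\geq i}$.

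\emph{Quotients.} The quotient $X_{\geq i}/X_{\geq i+1}$ is identified, as a complex of $R$-modules, with $e_{F_i}X$. It remains to show the $B$-action on it is through $\chi_{F_i}$. For $a\in RB$ and $x\in e_{F_i}X$, modulo $X_{\geq i+1}$ we have
\[
ax\equiv e_{F_i}ae_{F_i}x .
\]
The key identity is $e_{F_i}ae_{F_i}=\chi_{F_i}(a)e_{F_i}$. By Lemma~\ref{lem2.8}(3), $e_{F_i}ae_{F_i}=\lambda e_{F_i}$ for some $\lambda\in R$. Applying the ring homomorphism $\chi_{F_i}\colon RB\to R$ (the action on $R_{F_i}$) and using Lemma~\ref{lem2.8}(4) gives $\lambda=\chi_{F_i}(a)$. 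Since $\chi_{F_i}$ is multiplicative, $\chi_{F_i}(e_{F_i})^2\chi_{F_i}(a)=\chi_{F_i}(a)$, which is consistent with this.

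Consequently the quotient is $e_{F_i}X$ with $a$ acting as $\chi_{F_i}(a)$, which is exactly $R_{F_i}\otimes_R e_{F_i}X$. This isomorphism is natural in $X$ and compatible with the differentials.

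I expect no real obstacle here. The only points needing care are the ordering convention in the $RB$-stability check and the derivation of $e_{F_i}ae_{F_i}=\chi_{F_i}(a)e_{F_i}$ from Lemma~\ref{lem2.8}(3),(4).
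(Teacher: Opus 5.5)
Your proposal is correct and is the paper's argument: the paper's proof simply applies the filtration $M_{\geq i}=\bigoplus_{j\geq i}e_{F_j}M$ from the proof of Proposition~\ref{prop2.10} degreewise, which is exactly what you do. You additionally spell out two things the paper leaves implicit, namely the $RB$-stability check via Lemma~\ref{lem2.8}(2) and the ordering convention, and the derivation of $e_{F_i}ae_{F_i}=\chi_{F_i}(a)e_{F_i}$ from Lemma~\ref{lem2.8}(3),(4) together with the multiplicativity of $\chi_{F_i}$.
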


\begin{proof}
It suffices to apply the filtration constructed in the proof of Proposition~\ref{prop2.10} degreewise.
\end{proof}

\begin{proposition}\label{prop2.12}
Let B be a left regular band. Then a complex $X$ of $RB$-modules is perfect over $RB$ if and only if its underlying $R$-complex is perfect, or equivalently, if and only if $e_FX$ is perfect over $R$ for every $F\in L_B$.
\end{proposition}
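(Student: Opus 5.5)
We prove the three conditions equivalent in a cycle: (a) $X$ is perfect over $RB$; (b) the underlying $R$-complex of $X$ is perfect; (c) $e_FX$ is perfect over $R$ for every $F\in L_B$. Throughout, "perfect" means isomorphic in the derived category to a bounded complex of finitely generated projectives, and all three conditions are invariant under quasi-isomorphism. For (c) this holds because $e_F(-)$ is exact on complexes by Remark~\ref{rem2.9}, so it preserves quasi-isomorphisms.

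For (a)$\Rightarrow$(b), we use that restriction along $R\to RB$ is exact. Since $RB$ is finite free over $R$, restriction sends finitely generated projective $RB$-modules to finitely generated projective $R$-modules. Hence a bounded complex of finitely generated projective $RB$-modules restricts to a perfect $R$-complex.

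For (b)$\Rightarrow$(c), the decomposition $X=e_FX\oplus(1-e_F)X$ is a degreewise split decomposition of $R$-complexes, natural in $X$. So $e_FX$ is a direct summand of $X$ in $D(R)$. Since $\Perf(R)$ is thick, hence closed under summands, $e_FX$ is perfect.

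The direction (c)$\Rightarrow$(a) is the substantive one. Lemma~\ref{lem2.11} gives a finite filtration of $X$ by subcomplexes with successive quotients $R_{F_i}\otimes_R e_{F_i}X$. Each short exact sequence of complexes in the filtration yields an exact triangle in $D(RB)$. Since $\Perf(RB)$ is thick, it suffices to show that each quotient $R_{F}\otimes_R P$ is perfect over $RB$ whenever $P$ is a perfect $R$-complex. Here $R_F\otimes_R -$ is computed degreewise; it is exact and already derived, because $R_F$ is free of rank one over $R$.

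To prove this, first replace $e_FX$ by a bounded complex $P$ of finitely generated projective $R$-modules. This replacement is legitimate because $R_F\otimes_R-$ is exact and so preserves quasi-isomorphisms. Next, filter $P$ by brutal truncations. This reduces the claim to a single module $R_F\otimes_R Q$ with $Q$ finitely generated projective over $R$. Write $Q\oplus Q'\cong R^n$. Then $R_F\otimes_R Q$ is a summand of $R_F^{\,n}$, and $R_F$ is perfect over $RB$ by Proposition~\ref{prop2.10}. Thick closure then gives that $R_F\otimes_R Q$ is perfect.

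The main obstacle, though mild, is bookkeeping rather than new mathematics: turning the degreewise filtration of a possibly unbounded complex into triangles in $D(RB)$, and checking that all the reductions are compatible with quasi-isomorphism. Both points are handled by the exactness of $e_F(-)$ and of $R_F\otimes_R-$.
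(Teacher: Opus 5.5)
Your proposal is correct and follows the paper's proof. Perfection over $RB$ passes to the underlying $R$-complex because $RB$ is finite free over $R$. The splitting $X=\bigoplus_F e_FX$ handles the equivalence with the condition on each $e_FX$. The converse uses the filtration of Lemma~\ref{lem2.11} together with Proposition~\ref{prop2.10}. The only difference is that you spell out, via brutal truncations and the summand $R_F\otimes_R Q\subseteq R_F^{\,n}$, the step the paper compresses into ``it follows that $R_F\otimes_R e_FX$ is perfect over $RB$.''
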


\begin{proof}
Since $RB$ is finite free over $R$, every finitely generated projective $RB$-module is finitely generated projective over $R$. Thus perfection over $RB$ implies perfection
of the underlying $R$-complex. The decomposition of $R$-complexes \(X=\bigoplus_{F\in L_B}e_FX\) shows that the underlying $R$-complex is perfect if and only if every $e_FX$ is perfect over $R$.

Conversely, suppose that every $e_FX$ is perfect over $R$. By Proposition~\ref{prop2.10}, each $R_F$ admits a bounded resolution by finitely generated projective $RB$-modules. It follows that $R_F\otimes_R e_FX$ is perfect over $RB$. The finite filtration in Lemma~\ref{lem2.11} then implies that $X$ is perfect over $RB$.
\end{proof}

\begin{corollary}\label{cor2.13}
Let B be a left regular band. Then the category $D(RB)$ is compactly generated with compact part $\Perf(RB)$, and the diagonal derived tensor product $-\otimes_R^{\mathbf L}-$ preserves compact objects.
\end{corollary}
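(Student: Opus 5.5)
Two things need to be checked: that $D(RB)$ is compactly generated with $\Perf(RB)$ as its compact objects, and that the diagonal tensor product preserves compacts.

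For the first, $RB$ is a ring, so $D(RB)$ is compactly generated by the single compact generator $RB$. By the standard result of Rickard, Keller and Neeman, its compact objects are exactly the perfect complexes, i.e.\ $\thick\langle RB\rangle$, which consists of the complexes isomorphic in $D(RB)$ to bounded complexes of finitely generated projectives; this is $\Perf(RB)$. To make the argument self-contained, one can also use Lemma~\ref{lem2.8}(1): the projectives $(RB)e_F$, $F\in L_B$, are direct summands of $RB$ and generate, since $\Hom_{D(RB)}((RB)e_F[n],X)\cong H^{-n}(e_FX)$ by Remark~\ref{rem2.9}, and $X=\bigoplus_F e_FX$ is acyclic if all these groups vanish.

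For the second, the derived diagonal tensor product $X\otimes_R^{\mathbf L}Y$ preserves coproducts and is exact in each variable. It suffices to show that for $X,Y\in\Perf(RB)$ the object $X\otimes_R^{\mathbf L}Y$ is perfect over $RB$. First, the forgetful functor $D(RB)\to D(R)$ commutes with the tensor product, because the diagonal tensor product is computed on underlying $R$-complexes with the $B$-action added. Concretely, we replace $X$ by a bounded complex of finitely generated projective $RB$-modules. Such a complex is a bounded complex of finitely generated projective $R$-modules, since $RB$ is finite free over $R$. Being K-flat over $R$, its ordinary tensor product with $Y$ computes $X\otimes_R^{\mathbf L}Y$, and this tensor product is a complex of $RB$-modules with the diagonal action. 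Its underlying $R$-complex is $X\otimes_R Y$ with both factors perfect over $R$, so it is perfect over $R$. Proposition~\ref{prop2.12} then says this complex of $RB$-modules is perfect over $RB$.

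The main subtle point is the well-definedness of the derived diagonal tensor product on $D(RB)$. We need K-flat-over-$R$ replacements inside complexes of $RB$-modules. These exist: K-projective $RB$-resolutions are K-projective, hence K-flat, over $R$, since $RB$ is $R$-projective. Once that is in place, Proposition~\ref{prop2.12} does the remaining work.
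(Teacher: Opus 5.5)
Your proposal is correct and follows the paper's route. The paper gives no separate proof and treats the statement as an immediate consequence of Proposition~\ref{prop2.12} together with the standard identification of the compact objects of $D(RB)$ with perfect complexes; your argument does exactly this: compute $X\otimes_R^{\mathbf L}Y$ via a bounded projective $RB$-resolution, observe that its underlying $R$-complex is perfect, and invoke Proposition~\ref{prop2.12}, while your remark that K-projective $RB$-complexes are K-projective over $R$ supplies a well-definedness detail the paper leaves implicit.
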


\begin{remark}\label{rem2.14}
Unless $B$ is the trivial monoid, $D(RB)$ is not rigidly-compactly generated. Indeed, one can choose $F<\max L$ and $b$ with $\sigma(b)\nleq F$. Then the compact object $R_F$ is not dualizable. If it had a dual $D$, the underlying $R$-linear morphism of the coevaluation $R\to R_F\otimes_R^{\mathbf L}D$ would be zero. Applying the forgetful functor to the triangle identity would then imply that $R_F=0$, a contradiction.
\end{remark}

\section{The Balmer spectrum}\label{sec3}
In this section, we show that the Balmer spectrum of $\Perf(RB)$ is $\Spec R\times L_B$, where $L_B$ is the support lattice of $B$. We also show that there is a bijection between the localizing  ideals of  $D(RB)$ and the stable subsets of $\Spec R\times L_B$. At the end of this section, we construct a band algebra such that  the generalization of Balmer's Nerves of Steel Conjecture to non-rigid setting fails for $\Perf(RB)$.

Fix a commutative Noetherian ring $R$ and a (finite) left regular band $B$.  For $F\in L_B$, choose $b_F\in B$ with $\sigma(b_F)=F$.  Then there is an exact functor
\[
\omega_F:\operatorname{Mod}(RB)\longrightarrow\operatorname{Mod}(R),
\qquad M\longmapsto b_FM.
\]
 In particular, we have \(\omega_F(R_G)=R\) if \(F\leq G\) and \(\omega_F(R_G)=0\) otherwise. Applying this functor degreewise induces an exact functor
\[
\omega_F:D(RB)\longrightarrow D(R),
\qquad X\longmapsto b_FX,
\]
which we denote by the same symbol.  It is strong symmetric monoidal because \(b_F(M\otimes_RN)=(b_FM)\otimes_R(b_FN)\) and \(b_FR=R.\)

\begin{definition}\label{def3.1}
For $X\in D(RB)$, define the tensor support of $X$ by
\[
 \operatorname{Supp}_B(X):=
 \{(\mathfrak p,F)\in\Spec R\times L:
 \kappa(\mathfrak p)\otimes_R^{\mathbf L}\omega_F(X)\ne0\}.
\]
\end{definition}

\begin{lemma}\label{lem3.2}
For $X\in D(RB)$, the following hold:
\begin{enumerate}
\item One has
\(
\operatorname{Supp}_B(X)=
\bigcup_{G\in L_B}\supp_R(e_GX)\times\mathord\downarrow G,\) which is independent of the choice of $b_F$.

\item One has $\operatorname{Supp}_B(X)=\varnothing$
if and only if $X\simeq0$.

\item For every $Y\in D(RB)$, one has
\(\operatorname{Supp}_B(X\otimes_R^{\mathbf L}Y)=
\operatorname{Supp}_B(X)\cap\operatorname{Supp}_B(Y).\)

\item If $X$ is compact, then $\operatorname{Supp}_B(X)$
is closed in $\Spec R\times L_B$.
\item For any family $(X_i)_{i\in I}$ in $D(RB)$, one has
\(
\operatorname{Supp}_B\left(\bigoplus_{i\in I}X_i\right)
=\bigcup_{i\in I}\operatorname{Supp}_B(X_i).
\)

\item \(\operatorname{Supp}_B(Y)\subseteq\operatorname{Supp}_B(X)\cup\operatorname{Supp}_B(Z)\), for any  triangle \(X\longrightarrow Y\longrightarrow Z\longrightarrow X[1]\).
\end{enumerate}
\end{lemma}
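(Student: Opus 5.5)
The central step is a formula for $\omega_F$ in terms of the idempotents $e_G$. Everything else follows from it together with standard properties of the Balmer--Favi support $\supp_R$ on $D(R)$.

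\textbf{Key computation.} I will show that, as $R$-complexes,
\[
\omega_F(X)\cong\bigoplus_{G\geq F}e_GX.
\]
Since $X\cong\bigoplus_G e_GX$ as $R$-complexes, it suffices to compute $b_F e_G X$. By Lemma~\ref{lem2.8}(2),(3), $e_G a e_G=\chi_G(a)e_G$, so $b_F$ acts on the $G$-graded pieces of the filtration of Lemma~\ref{lem2.11} by $\chi_G(b_F)$. This scalar is $1$ exactly when $F\leq G$. It is cleaner to argue with the filtration directly. The functor $\omega_F$ is exact and applied degreewise, and $\omega_F(R_G\otimes_R M)=\omega_F(R_G)\otimes_R M$, which is $M$ if $F\le G$ and $0$ otherwise. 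Applying $\omega_F$ to the functorial finite filtration of Lemma~\ref{lem2.11} therefore gives a finite filtration of $\omega_F(X)$ in $D(R)$ with subquotients $e_GX$ for $G\geq F$. I must then check that this filtration splits, or at least that it suffices for supports. In fact supports suffice: for a finite filtration, $\kappa(\mathfrak p)\otimes^{\mathbf L}_R\omega_F(X)\neq0$ iff some $\kappa(\mathfrak p)\otimes^{\mathbf L}_R e_GX\neq0$. The ``if'' direction needs splitting. I will get it from the $R$-module decomposition: $b_F$ restricted to $M_{\ge i}$ modulo $M_{\ge i+1}$ is the identity or zero, and $b_F$ is idempotent. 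The image of $b_F$ on $X$ is then a complex whose own filtration has exactly the pieces $e_GX$, $G\ge F$. Because these pieces are $R$-direct summands in each degree, compatibly with differentials up to the filtration, the idempotent $b_F$ splits them. I expect this splitting step to be the main obstacle. The fallback is to use the degreewise $R$-split filtration and the fact that $\kappa(\mathfrak p)\otimes^{\mathbf L}_R-$ applied to a finite filtration whose graded pieces are determined by $e_G$ is computed via the decomposition $X=\bigoplus e_GX$ of $R$-complexes. This gives
\[
\operatorname{Supp}_B(X)=\{(\mathfrak p,F):\exists\,G\ge F,\ \mathfrak p\in\supp_R(e_GX)\}=\bigcup_G\supp_R(e_GX)\times\mathord\downarrow G,
\]
which is (1). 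Independence of $b_F$ is then visible.

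\textbf{The remaining items.} For (2), the decomposition $X\cong\bigoplus e_GX$ in $D(R)$, together with detection by $\supp_R$ on $D(R)$ for Noetherian $R$, shows that $\operatorname{Supp}_B(X)=\varnothing$ iff all $e_GX\simeq0$ iff $X\simeq0$. For (3), $\omega_F$ is strong monoidal and exact, so $\omega_F(X\otimes^{\mathbf L}Y)\cong\omega_F X\otimes^{\mathbf L}_R\omega_F Y$. Then
\[
\kappa(\mathfrak p)\otimes^{\mathbf L}_R(-\otimes^{\mathbf L}_R-)\cong(\kappa(\mathfrak p)\otimes^{\mathbf L}_R-)\otimes_{\kappa(\mathfrak p)}(\kappa(\mathfrak p)\otimes^{\mathbf L}_R-),
\]
and a tensor product of complexes over a field is nonzero iff both factors are. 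Here one must be careful that $\otimes^{\mathbf L}$ on $D(RB)$ restricts along the forgetful functor to $\otimes^{\mathbf L}_R$. This holds because $RB$-projectives are $R$-projective. For (4), if $X$ is compact, each $e_GX$ is perfect over $R$ by Proposition~\ref{prop2.12}. Hence $\supp_R(e_GX)$ is closed, and so is the finite union of the products $\supp_R(e_GX)\times\mathord\downarrow G$. Items (5) and (6) follow from the definition, since $\omega_F$ and $\kappa(\mathfrak p)\otimes^{\mathbf L}_R-$ are exact and preserve coproducts, and homology commutes with coproducts and satisfies the long exact sequence.
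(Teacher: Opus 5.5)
Your arguments for (3), (5), (6), and for the inclusion $\operatorname{Supp}_B(X)\subseteq\bigcup_{G}\supp_R(e_GX)\times\mathord\downarrow G$ in (1), are sound and match the paper. The gap is the reverse inclusion in (1). If $G\geq F$ and $\kappa(\mathfrak p)\otimes^{\mathbf L}_R e_GX\neq0$, you must show $\kappa(\mathfrak p)\otimes^{\mathbf L}_R\omega_F(X)\neq0$.

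You correctly note that the filtration of $\omega_F(X)$ in $D(R)$ cannot give this, since graded pieces of a filtered complex can cancel in the triangles. However, the splitting you then invoke (``the idempotent $b_F$ splits them'') is only asserted, and the fallback sentence contains no argument. The splitting is not formal, because $b_FX$ and $\bigoplus_{G\geq F}e_GX$ are in general different $R$-summands of $X$. For instance, take $B=\{1,x,y\}$ with $xy=x$ and $yx=y$, set $e_{\max L_B}=x$, and compute $\omega_{\max L_B}$ with $y$; then $yRB=Ry\neq Rx=xRB$. The isomorphism $\omega_F(X)\cong\bigoplus_{G\geq F}e_GX$ is in fact true. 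Proving it takes a real argument: unitriangularity of the Peirce components $e_{G'}b_Fe_G$, plus the vanishing $b_Fe_G=0$ for $G\not\geq F$. None of this is in your sketch. Because your (2) and (4) rest on the full equality in (1), the gap propagates to them.

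There are two quick ways to close it.

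\emph{The paper's route.} The paper avoids splitting by working with cohomology modules instead of complexes. Since $b_F(-)$ and $e_G(-)$ are exact and commute with $\kappa(\mathfrak p)\otimes^{\mathbf L}_R-$, the module $H^n(\kappa(\mathfrak p)\otimes^{\mathbf L}_R\omega_F(X))=b_F\,H^n(\kappa(\mathfrak p)\otimes^{\mathbf L}_RX)$ inherits a finite filtration. Its graded pieces are $H^n(\kappa(\mathfrak p)\otimes^{\mathbf L}_Re_GX)$ for $G\geq F$ and $0$ otherwise. A filtered \emph{module}, unlike a filtered complex, is nonzero if and only if some graded piece is.

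\emph{A retraction.} The identity $e_Gae_G=\chi_G(a)e_G$ that you quote at the start already suffices. For $G\geq F$, the chain maps $e_GX\xrightarrow{\,b_F\cdot\,}b_FX\xrightarrow{\,e_G\cdot\,}e_GX$ compose to multiplication by $e_Gb_Fe_G=\chi_G(b_F)e_G=e_G$, which is the identity. So $e_GX$ is a retract of $\omega_F(X)$ in $D(R)$, and nonvanishing after $\kappa(\mathfrak p)\otimes^{\mathbf L}_R-$ passes from $e_GX$ to $\omega_F(X)$.

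Either argument, together with your filtration argument for the other inclusion, completes (1). Then (2) and (4) go through as you wrote them.
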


\begin{proof} For (1), if $b,c\in B$ have support $F$, then $bc=b$ and $cb=c$. Multiplication by $b$ and $c$ gives inverse natural isomorphisms between $cX$ and $bX$.
Hence $\supp_R(bX)=\supp_R(cX)$. Thus the tensor support is independent of the choice of $b_F$.   For each $n\in\mathbb Z$, apply the exact functor $b_F(-)$
to the finite filtration of $H^n(\kappa(\mathfrak p)\otimes_R^{\mathbf L}X)$ constructed in the proof of Proposition~\ref{prop2.10}.
On the quotient $\kappa(\mathfrak p)_G\otimes_{\kappa(\mathfrak p)} e_GH^n(\kappa(\mathfrak p)\otimes_R^{\mathbf L}X)$,
$b_F$ acts as the identity if $F\leq G$ and as zero otherwise. Then exactness of $b_F(-)$ and $e_G(-)$ gives
\[
\kappa(\mathfrak p)\otimes_R^{\mathbf L}\omega_F(X)\ne0
\quad\Longleftrightarrow\quad
\kappa(\mathfrak p)\otimes_R^{\mathbf L}e_GX\ne0
\text{ for some }G\geq F.
\]
This proves the formula.

By (1), $\operatorname{Supp}_B(X)=\varnothing$ if and only if $\supp_R(e_GX)=\varnothing$ for every $G$. Since support detects zero objects in $D(R)$ and
\(X=\bigoplus_{G\in L_B}e_GX\) as $R$-complexes, it follows that $\operatorname{Supp}_B(X)=\varnothing$ if and only if $X\simeq0$. This proves (2).

For (3), the functor $\kappa(\mathfrak p)\otimes_R^{\mathbf L}\omega_F(-)$ is strong symmetric monoidal with values in $D(\kappa(\mathfrak p))$. Since the derived tensor product of two nonzero complexes over a field is nonzero, then the desired identity follows.

For (4), if $X$ is compact, every $e_GX$ is perfect over $R$, so $\supp_R(e_GX)$ is closed in $\Spec R$. Since each $\mathord\downarrow G$ is closed in $L_B$, the formula in (1) expresses $\operatorname{Supp}_B(X)$ as a finite union of closed subsets of $\Spec R\times L_B$, proving the assertion.

The proofs of (5) and (6) are routine.
\end{proof}

\begin{lemma}\label{lem3.3}
Let $k$ be a field and $X\in D(kB)$. Then
\[
 \Loc_\otimes\langle X\rangle=\Loc\langle k_F:\omega_F(X)\ne0\rangle.
\]
Moreover, for every $F\in L_B$, one has \(k_F\in\Loc\langle X\otimes_k k_G:G\in L_B\rangle\) if and only if \(\omega_F(X)\ne0.\)
\end{lemma}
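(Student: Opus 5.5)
We prove the ``moreover'' statement first and then derive the main equality from it. Over a field, Lemma~\ref{lem2.11} applies to any complex. Every complex of $k$-vector spaces splits as a sum of shifts of $k$, so $e_GX\simeq\bigoplus_n V_n[-n]$ with $V_n=H^n(e_GX)$. Hence $k_G\otimes_k e_GX$ is a coproduct of shifts of $k_G$, and it is nonzero exactly when $e_GX\neq0$. By Lemma~\ref{lem3.2}(1) over $k$ (where $\Spec k$ is a point), $\omega_F(X)\neq0$ if and only if $e_GX\neq0$ for some $G\geq F$. Two consequences follow:
\[
X\in\Loc\langle k_G:e_GX\neq0\rangle\subseteq\Loc\langle k_F:\omega_F(X)\neq0\rangle,
\]
where the inclusion uses $\omega_F(X)\neq0$ whenever $e_FX\neq0$. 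Also $\omega_F(k_G)\neq 0$ if and only if $F\le G$.

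For ``$\supseteq$'' in the moreover part, the idea is to isolate $k_F$ from $X\otimes_k k_F$. Suppose $\omega_F(X)\neq0$. Write $Y=X\otimes_k k_F$. We have $\omega_G(Y)=\omega_G(X)\otimes\omega_G(k_F)$, which vanishes unless $G\le F$, and $\omega_F(Y)=\omega_F(X)\neq0$. Apply the filtration of Lemma~\ref{lem2.11} to $Y$. Its subquotients are $k_G\otimes e_GY$ for those $G$ with $e_GY\neq0$, and each such $G$ must satisfy $G\le F$ because $\omega_G(Y)\ne 0$. The filtration is ordered so that larger elements come first. Therefore $e_FY=Y_{\ge i}/Y_{\geq i+1}$ at the position of $F$, and since nothing strictly above $F$ occurs, $Y_{\geq i}$ is the bottom-most submodule containing $F$. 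More precisely, the components $e_{G}Y$ with $G>F$ vanish, so $Y_{\ge i}$ contains only the $F$-term and the terms strictly below it.

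To get $F$ as a quotient of $Y$ we need an ordering trick. Instead of the given filtration, use the complementary quotient $Y\to Y/Y_{\geq i+1}$. The terms with $j\le i$ are exactly those $G$ appearing before $F$ or equal to it, and among elements $\le F$ only $F$ itself qualifies. So $Y/Y_{\ge i+1}\cong k_F\otimes e_FY$, a nonzero sum of shifts of $k_F$. The kernel $Y_{\ge i+1}$ is a subobject, not a summand, so $k_F$ is not directly in the localizing subcategory. The step I expect to be the main obstacle is descending the filtration to split off $k_F$.

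The fix is induction on $F$ from the bottom of $L_B$. Assume $k_G\in\Loc\langle X\otimes k_H:H\rangle$ for all $G<F$ with $\omega_G(X)\neq0$. Every $G<F$ satisfies $\omega_G(X)\supseteq$ contributions from $\omega_F(X)$, because $e_HX\neq0$ for some $H\ge F>G$. So all lower $k_G$ are already available. Then $Y_{\geq i+1}$ is built from $k_G$ with $G<F$, and the triangle
\[
Y_{\ge i+1}\to Y\to k_F\otimes e_FY
\]
puts $k_F\otimes e_FY$ in the localizing subcategory. Since $k_F$ is a summand of this nonzero sum of shifts, $k_F$ lies there too.

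For the converse of the moreover part, suppose $k_F\in\Loc\langle X\otimes k_G\rangle$. The functor $\omega_F$ is exact, preserves coproducts and is monoidal. We have $\omega_F(X\otimes k_G)=\omega_F(X)\otimes\omega_F(k_G)$, which vanishes whenever $\omega_F(X)=0$, and this would force $\omega_F(k_F)=k$ to be $0$. Hence $\omega_F(X)\neq0$.

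For the main equality, ``$\supseteq$'' follows from the moreover part, since $X\otimes k_G\in\Loc_\otimes\langle X\rangle$. For ``$\subseteq$'', the right-hand side $\mathcal L=\Loc\langle k_F:\omega_F(X)\ne0\rangle$ contains $X$ by the first paragraph. It remains to show that $\mathcal L$ is a tensor ideal, which reduces to $k_F\otimes Z\in\mathcal L$ for every $Z$. Expanding $Z$ through its filtration, this reduces to $k_F\otimes k_G=k_{F\wedge G}$. Here $\omega_{F\wedge G}(X)\neq0$ because $F\wedge G\le F$ and $\omega$-nonvanishing is downward closed by Lemma~\ref{lem3.2}(1). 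So $k_{F\wedge G}\in\mathcal L$, and $\mathcal L$ is a tensor ideal.
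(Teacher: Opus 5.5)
Your proof is correct and follows the paper's argument: the filtration of Lemma~\ref{lem2.11}, whose quotients split over a field as sums of shifts of $k_G$; downward-closedness of $\{F:\omega_F(X)\neq0\}$ to make the right-hand side a tensor ideal; a bottom-up induction that isolates $k_F\otimes_k e_F(X\otimes_k k_F)$ as the top quotient of $X\otimes_k k_F$; and $\omega_F$ to get the converse. The only difference is the order: you run the induction inside $\Loc\langle X\otimes_k k_G:G\in L_B\rangle$ and deduce the reverse containment of the main equality from it, whereas the paper runs it inside $\Loc_\otimes\langle X\rangle$ and remarks that the ``moreover'' part follows by the same induction.
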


\begin{proof}

By Lemma~\ref{lem2.11}, there is a finite functorial filtration of $X$ with quotients $k_G\otimes_k e_GX$. Because $\omega_F$ and $e_G(-)$ are exact, the filtration of every $H^n(X)$  implies that $\omega_F(X)\ne0$ if and only if there exists some $G$ such that $F\leq G$ and $H^*(e_GX)\ne0$. Since $e_GX$ is a complex
of vector spaces, it is isomorphic in $D(k)$ to the direct sum of its cohomology groups, and so
\[
 X\in\Loc\langle k_G:H^*(e_GX)\ne0\rangle\subseteq \Loc\langle k_F:\omega_F(X)\ne0\rangle.
\]
Since the set $\{F: \omega_F(X)\ne0\}$ is downward-closed, $\Loc\langle k_F:\omega_F(X)\ne0\rangle$ is a localizing ideal. This proves the \(\subseteq\) containment.

For the reverse containment, we prove by induction. Fix $F$ and assume that $k_H\in\Loc_\otimes\langle X\rangle$ for every $H<F$ with $ \omega_H(X)\ne0$.
Tensoring the filtration of $X$ with $k_F$ and using $k_F\otimes_k k_G\cong k_{F\wedge G}$ shows that $e_H(k_F\otimes_k X)=0$ unless $H\leq F$.
Since $\omega_F(k_F\otimes_k X)\cong\omega_F(X)\ne0$, we have $H^*(e_F(k_F\otimes_k X))\ne0$. For every $H<F$, the induction hypothesis gives
\[
k_H\otimes_k e_H(k_F\otimes_k X)
\in\Loc\langle k_H\rangle
\subseteq\Loc_\otimes\langle X\rangle.
\]
Thus all successive quotients in the filtration of $k_F\otimes_k X$, except possibly the quotient indexed by $F$, belong to $\Loc_\otimes\langle X\rangle$.
Since $k_F\otimes_k X\in\Loc_\otimes\langle X\rangle$, the triangles associated with this filtration imply that
\[
k_F\otimes_k e_F(k_F\otimes_k X)
\in\Loc_\otimes\langle X\rangle.
\]
As $e_F(k_F\otimes_k X)$ has nonzero cohomology, some shift of $k_F$ is a direct summand of this quotient. Hence $k_F\in\Loc_\otimes\langle X\rangle$, completing the induction.

For the final assertion, the forward implication follows by the same induction, and the reverse implication follows by applying $\omega_F$.
\end{proof}

\begin{lemma}\label{lem3.4}
For every $X\in D(RB)$, one has
\[
 X\in\Loc\langle X\otimes_R^{\mathbf L}\kappa(\mathfrak p):
                          \mathfrak p\in\Spec R\rangle.
\]
\end{lemma}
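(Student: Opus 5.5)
The plan is to reduce the statement to the known local-to-global principle in $D(R)$ and transport it along the $RB$-module structure. Write $\mathcal L:=\Loc\langle X\otimes_R^{\mathbf L}\kappa(\mathfrak p):\mathfrak p\in\Spec R\rangle\subseteq D(RB)$. For a Noetherian ring $R$, Neeman's theorem gives $R\in\Loc\langle\kappa(\mathfrak p):\mathfrak p\in\Spec R\rangle$ inside $D(R)$. I will deduce the claim by applying the functor $X\otimes_R^{\mathbf L}-:D(R)\to D(RB)$. Here a complex $C$ of $R$-modules is viewed with $B$ acting only on the $X$ factor, equivalently $X\otimes_R^{\mathbf L}C\cong X\otimes_R^{\mathbf L}\mathrm{triv}(C)$, where $\mathrm{triv}(C)$ is $C$ with trivial $B$-action (the diagonal tensor in $D(RB)$).

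The key steps are as follows.

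First, check that $X\otimes_R^{\mathbf L}-:D(R)\to D(RB)$ is a well-defined exact, coproduct-preserving functor. It can be computed by taking a K-projective (or K-flat) resolution of $C$ over $R$: $RB$ acts through $X$, and since $RB$-module structure is just $R$-module structure plus a $B$-action, the tensor of a complex of $RB$-modules with a K-flat $R$-complex is a complex of $RB$-modules. Quasi-isomorphisms in either variable are preserved because the underlying $R$-complexes are the usual derived tensor products.

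Second, this functor sends $R$ to $X$ and $\kappa(\mathfrak p)$ to $X\otimes_R^{\mathbf L}\kappa(\mathfrak p)$.

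Third, the preimage $\{C\in D(R): X\otimes_R^{\mathbf L}C\in\mathcal L\}$ is a localizing subcategory of $D(R)$. This uses exactness and coproduct preservation, and thickness follows automatically, since localizing subcategories of compactly generated categories are closed under summands via the Eilenberg swindle. The preimage contains every $\kappa(\mathfrak p)$, hence contains $\Loc\langle\kappa(\mathfrak p)\rangle\ni R$. Therefore $X\in\mathcal L$.

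The main obstacle is the input $R\in\Loc\langle\kappa(\mathfrak p):\mathfrak p\in\Spec R\rangle$. If it is not taken directly from Neeman's classification, I would derive it from the local-to-global principle for $D(R)$: $R\in\Loc\langle\Gamma_{\mathfrak p}R\rangle$, together with the fact that $\Gamma_{\mathfrak p}R\in\Loc\langle\kappa(\mathfrak p)\rangle$ (Neeman, or Benson--Iyengar--Krause stratification of $D(R)$, minimality at each prime). Both are standard for commutative Noetherian $R$, so I would cite them rather than reprove them.

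A secondary point to state carefully is that the tensor used in $\mathcal L$ is the paper's diagonal $\otimes_R^{\mathbf L}$ with $\kappa(\mathfrak p)$ carrying the trivial action, i.e. $\kappa(\mathfrak p)=\kappa(\mathfrak p)_{\max L_B}$. This agrees with the functor above, since the trivial action makes the diagonal action coincide with the action on $X$ alone.
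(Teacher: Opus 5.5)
Your proposal is correct and is essentially the paper's proof. The paper also takes $R\in\Loc\langle\kappa(\mathfrak p):\mathfrak p\in\Spec R\rangle$ from Neeman and applies the exact coproduct-preserving functor $X\otimes_R^{\mathbf L}-\colon D(R)\to D(RB)$, with the second factor given the trivial $B$-action so that $R\mapsto X$; your remarks on well-definedness and on the preimage being localizing spell out what the paper leaves implicit.
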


\begin{proof}
Neeman's classification for a commutative Noetherian ring implies
\[D(R)=\Loc\langle\kappa(\mathfrak p):\mathfrak p\in\Spec R\rangle,\] see \cite{Neeman}. The conclusion follows by applying the exact coproduct-preserving functor
$X\otimes_R^{\mathbf L}-\colon D(R)\to D(RB)$ to \(R\in\Loc\langle\kappa(\mathfrak p):\mathfrak p\in\Spec R\rangle.\) Here the second factor carries the trivial $B$-action, so its value at $R$ is $X$.
\end{proof}

\begin{definition}\label{def3.5}
A subset $W\subseteq\Spec R\times L_B$ is \emph{stable} if $(\mathfrak p,F)\in W$ and $G\leq F$ imply $(\mathfrak p,G)\in W$.
\end{definition}

\begin{theorem}\label{thm3.6}
For a commutative Noetherian ring $R$ and a finite left regular band $B$, there are mutually inverse bijections
\[
\left\{\text{localizing ideals of }D(RB)\right\}
\underset{\Psi}{\overset{\Phi}{\rightleftarrows}}
\left\{W\subseteq\Spec R\times L_B:W\text{ is stable}\right\},
\]
given by
\[
\Phi(\mathcal J)=\bigcup_{X\in\mathcal J}\operatorname{Supp}_B(X),\qquad \Psi(W)=\{X:\operatorname{Supp}_B(X)\subseteq W\}=\Loc\langle\kappa(\mathfrak p)_F:(\mathfrak p,F)\in W\rangle.
\]
\end{theorem}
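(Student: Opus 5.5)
First I would check that both maps land where they should. For a localizing ideal $\mathcal J$, $\Phi(\mathcal J)$ is stable because each $\operatorname{Supp}_B(X)=\bigcup_G\supp_R(e_GX)\times\mathord\downarrow G$ is stable by Lemma~\ref{lem3.2}(1). For a stable $W$, the class $\{X:\operatorname{Supp}_B(X)\subseteq W\}$ is a localizing subcategory by Lemma~\ref{lem3.2}(5),(6) (closure under summands comes from (5)), and a tensor ideal by (3). Next I would compute supports of the generators. Since $\omega_F(\kappa(\mathfrak p)_G)$ is $\kappa(\mathfrak p)$ when $F\le G$ and $0$ otherwise, and $\supp_R\kappa(\mathfrak p)=\{\mathfrak p\}$, we get $\operatorname{Supp}_B(\kappa(\mathfrak p)_G)=\{\mathfrak p\}\times\mathord\downarrow G$. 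Hence $\Loc\langle\kappa(\mathfrak p)_F:(\mathfrak p,F)\in W\rangle\subseteq\{X:\operatorname{Supp}_B(X)\subseteq W\}$ and $\Phi\Psi(W)=W$. This forces $\Loc\langle\kappa(\mathfrak p)_F\rangle_{W}$ to be a localizing ideal, because $\kappa(\mathfrak p)_F\otimes Y$ will lie in it by the local-to-global argument below.

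The core step is the reverse inclusion. Let $\operatorname{Supp}_B(X)\subseteq W$. By Lemma~\ref{lem3.4}, $X\in\Loc\langle X\otimes^{\mathbf L}_R\kappa(\mathfrak p)\rangle$, so it suffices to treat $Y=X\otimes^{\mathbf L}_R\kappa(\mathfrak p)$. This $Y$ is a $\kappa(\mathfrak p)B$-complex restricted along $RB\to\kappa(\mathfrak p)B$, and restriction is exact and coproduct-preserving. By Lemma~\ref{lem3.3} over $k=\kappa(\mathfrak p)$, $Y\in\Loc\langle k_F:\omega_F(Y)\ne0\rangle$ in $D(kB)$. Restricting, $Y\in\Loc\langle\kappa(\mathfrak p)_F:\omega_F(Y)\ne 0\rangle$ in $D(RB)$. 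Finally, $\omega_F(Y)=\kappa(\mathfrak p)\otimes^{\mathbf L}_R\omega_F(X)$, so $\omega_F(Y)\neq0$ exactly when $(\mathfrak p,F)\in\operatorname{Supp}_B(X)\subseteq W$. This gives the equality of the two descriptions of $\Psi(W)$.

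Then I would show $\Psi\Phi(\mathcal J)=\mathcal J$. The inclusion $\mathcal J\subseteq\Psi\Phi(\mathcal J)$ is trivial. For the converse, it suffices to show $\kappa(\mathfrak p)_F\in\mathcal J$ whenever $(\mathfrak p,F)\in\operatorname{Supp}_B(X)$ for some $X\in\mathcal J$. Then $Y=X\otimes^{\mathbf L}_R\kappa(\mathfrak p)\in\mathcal J$, because $\kappa(\mathfrak p)$ with trivial action is an object of $D(RB)$ and $\mathcal J$ is an ideal. Also $\omega_F(Y)\ne0$, and $Y\otimes^{\mathbf L}_R\kappa(\mathfrak p)_G\in\mathcal J$ for all $G$.

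The main obstacle is transporting the second statement of Lemma~\ref{lem3.3} from $D(kB)$ to $D(RB)$, since $\mathcal J$ lives over $R$. My plan is this. The lemma gives $k_F\in\Loc\langle Y\otimes_k k_G\rangle$ in $D(kB)$. Restricting to $D(RB)$ gives $\kappa(\mathfrak p)_F\in\Loc\langle \mathrm{res}(Y\otimes_k k_G)\rangle$. For this I would check that $\mathrm{res}(Y\otimes_k k_G)\cong Y\otimes^{\mathbf L}_R R_G$ up to the identification $\kappa(\mathfrak p)\otimes^{\mathbf L}_R\kappa(\mathfrak p)$ versus $\kappa(\mathfrak p)$. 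To avoid that subtlety, I would instead realize $Y\otimes_k k_G$ as $X\otimes^{\mathbf L}_R\kappa(\mathfrak p)_G$, which lies in $\mathcal J$ because $\mathcal J$ is an ideal. Then the localizing subcategory of $D(RB)$ generated by objects of $\mathcal J$ is contained in $\mathcal J$, and this completes the proof.
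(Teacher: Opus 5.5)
Your proposal is correct and follows the paper's proof essentially step for step. You establish well-definedness via Lemma~\ref{lem3.2}, prove the equality of the two descriptions of $\Psi(W)$ by applying Lemma~\ref{lem3.3} over $\kappa(\mathfrak p)$ together with Lemma~\ref{lem3.4}, and get $\Psi\Phi(\mathcal J)\subseteq\mathcal J$ from the second half of Lemma~\ref{lem3.3}. The subtlety you worry about at the end does not actually arise: $R_G$ is $R$-free, so $(X\otimes^{\mathbf L}_R\kappa(\mathfrak p))\otimes^{\mathbf L}_R R_G\cong X\otimes^{\mathbf L}_R\kappa(\mathfrak p)_G$, and this is exactly the identity the paper uses.
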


\begin{proof}
By Lemma~\ref{lem3.2}, $\Phi(\mathcal J)$ is stable and $\Psi(W)=\{X:\operatorname{Supp}_B(X)\subseteq W\}$ is a localizing ideal, so both maps are well defined. We first prove $\{X:\operatorname{Supp}_B(X)\subseteq W\}=\Loc\langle\kappa(\mathfrak p)_F:(\mathfrak p,F)\in W\rangle$ for a stable set $W$. The $\supseteq$ containment follows from the equality
\(\operatorname{Supp}_B(\kappa(\mathfrak p)_F)=\{\mathfrak p\}\times\mathord\downarrow F.\) Conversely, suppose $\operatorname{Supp}_B(X)\subseteq W$.
For each $\mathfrak p\in\Spec R$, Lemma~\ref{lem3.3}, together with the compatibility of $\omega_F$ with base change, gives
\[
X\otimes_R^{\mathbf L}\kappa(\mathfrak p)
\in\Loc\langle\kappa(\mathfrak p)_F:
(\mathfrak p,F)\in\operatorname{Supp}_B(X)\rangle.
\]
Since $\operatorname{Supp}_B(X)\subseteq W$, Lemma~\ref{lem3.4} now implies
\[
X\in\Loc\langle\kappa(\mathfrak p)_F:
(\mathfrak p,F)\in W\rangle.
\]
This proves the equality. The same argument also implies that $\Phi(\Psi(W))=W$.

It remains to prove $\Psi(\Phi(\mathcal J))=\mathcal J$. The inclusion $\mathcal J\subseteq\Psi(\Phi(\mathcal J))$ follows from the definitions. Conversely, let $X\in\Psi(\Phi(\mathcal J))$, so that $\operatorname{Supp}_B(X)\subseteq\Phi(\mathcal J)$. For every $(\mathfrak p,F)\in\operatorname{Supp}_B(X)$,
there exists some $Y\in \mathcal J$ such that $(\mathfrak p,F)\in \operatorname{Supp}_B(Y)$. Then by Lemma~\ref{lem3.3} and the identity \[
\bigl(Y\otimes_R^{\mathbf L}\kappa(\mathfrak p)\bigr)
\otimes_{\kappa(\mathfrak p)}\kappa(\mathfrak p)_G
\cong
\bigl(Y\otimes_R^{\mathbf L}\kappa(\mathfrak p)\bigr)
\otimes_R^{\mathbf L}R_G,
\] we have \[
\kappa(\mathfrak p)_F
\in\Loc\left\langle
\bigl(Y\otimes_R^{\mathbf L}\kappa(\mathfrak p)\bigr)
\otimes_{\kappa(\mathfrak p)}\kappa(\mathfrak p)_G:
G\in L_B
\right\rangle\subseteq \mathcal J.
\]
Since $\operatorname{Supp}_B(X)$ is stable, the formula proved first implies that \(X\in\mathcal J\), which concludes the proof.
\end{proof}

Now we can compute the Balmer spectrum of $\Perf(RB)$.

\begin{theorem}\label{thm3.7}
Let $R$ be a commutative Noetherian ring and $B$ a finite left regular band. Then there is a homeomorphism $\Spec R\times L_B\xrightarrow{\sim}\Spc\Perf(RB)$ given by
\[
(\mathfrak p,F)\longmapsto
\mathcal P_{\mathfrak p,F}
=\{X\in\Perf(RB):
\kappa(\mathfrak p)\otimes_R^{\mathbf L}\omega_F(X)=0\}.
\]
\end{theorem}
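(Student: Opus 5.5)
The plan is to realise $\operatorname{Supp}_B$, restricted to $\Perf(RB)$, as a support datum and then use Balmer's universal property. By Lemma~\ref{lem3.2}, on $\Perf(RB)$ the assignment $X\mapsto\operatorname{Supp}_B(X)$ takes closed values. It vanishes exactly on zero objects and is multiplicative for $\otimes$. It satisfies the triangle and sum axioms, and $\operatorname{Supp}_B(R)$ is everything. So it is a support datum on the Noetherian space $\Spec R\times L_B$ (Noetherian by the earlier remark). By Balmer's theorem, to get a homeomorphism it suffices to show the classifying property: the assignment $\mathcal I\mapsto\bigcup_{X\in\mathcal I}\operatorname{Supp}_B(X)$ is a bijection from radical thick ideals of $\Perf(RB)$ onto the subsets that are unions of closed subsets, i.e.\ the specialization-closed sets (here every closed set is a finite union of irreducible closed sets, so this is Balmer's condition for Noetherian spaces).

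Independently I will check directly that each $\mathcal P_{\mathfrak p,F}$ is a prime. It is the kernel of the strong monoidal exact functor
\[
\kappa(\mathfrak p)\otimes_R^{\mathbf L}\omega_F(-)\colon\Perf(RB)\to D(\kappa(\mathfrak p)),
\]
and over a field a tensor product vanishes only if a factor does. This gives a well-defined map, which is the one in the statement. Under the support-datum identification, $\mathcal P_{\mathfrak p,F}=\{X:(\mathfrak p,F)\notin\operatorname{Supp}_B(X)\}$. This matches Balmer's formula for the inverse of the comparison map $\Spc\to$ space, so the explicit formula is the homeomorphism once classification is established.

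For classification I will use Theorem~\ref{thm3.6} and compactness. Let $V$ be specialization-closed. Since $V\times$ (any set) stability holds for specialization-closed sets, $V$ is stable, so $\Psi(V)$ is a localizing ideal. I claim $\Psi(V)\cap\Perf(RB)$ is the thick ideal of compact objects with support in $V$, and that it generates $\Psi(V)$. The generators will be compact objects $K\otimes_R^{\mathbf L}R_F$, where $K$ is a Koszul complex on generators of $\mathfrak p$ with $V(\mathfrak p)\subseteq C_F$. Their support is $V(\mathfrak p)\times\mathord\downarrow F$, by Lemma~\ref{lem3.2}(3) and $\operatorname{Supp}_B(R_F)=\Spec R\times\mathord\downarrow F$. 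Neeman's result in $D(R)$ then shows that $\kappa(\mathfrak q)_G$ lies in the localizing ideal they generate for every $(\mathfrak q,G)\in V$, using $\kappa(\mathfrak q)\in\Loc\langle K\rangle$ when $\mathfrak q\supseteq\mathfrak p$, tensored with $R_G$. Injectivity of the map from thick ideals is then obtained as follows. Let $\mathcal I$ be a thick ideal. Then $\Loc\langle\mathcal I\rangle=\Psi(\Phi(\mathcal I))$ by Theorem~\ref{thm3.6}, since $\Loc\langle\mathcal I\rangle$ is a localizing ideal because the generators are compact and the tensor commutes with sums. Neeman's compactness theorem gives $\Loc\langle\mathcal I\rangle\cap\Perf(RB)=\mathcal I$. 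Hence $\mathcal I=\{X\in\Perf:\operatorname{Supp}_B(X)\subseteq\Phi(\mathcal I)\}$. Surjectivity onto specialization-closed $V$ comes from the generators above. Note that thick ideals are automatically radical here, since support determines them.

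The main obstacle is the middle step: that $\Phi(\mathcal I)$ is specialization-closed, and that every specialization-closed $V$ arises. The former follows since each compact support is closed (Lemma~\ref{lem3.2}(4)). The latter needs the explicit compact objects $K\otimes R_F$ lying in $\Perf$ by Proposition~\ref{prop2.12}. Care is also needed because Balmer's classification theorem is usually stated for rigid categories. So I will instead argue directly: the map $\Spc\to\Spec R\times L_B$ is bijective, with primes being exactly the $\mathcal P_{\mathfrak p,F}$ via the thick-ideal classification plus irreducibility of closures. It is closed and continuous because supports of objects correspond, $\operatorname{supp}(X)=\{\mathcal P:X\notin\mathcal P\}\leftrightarrow\operatorname{Supp}_B(X)$, and both spaces have these as a basis of closed sets.
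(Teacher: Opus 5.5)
Your proposal is correct and follows essentially the same route as the paper: classify thick ideals by combining $\Loc\langle\mathcal I\rangle\cap\Perf(RB)=\mathcal I$ with Theorem~\ref{thm3.6}, realize each specialization-closed set through the compact objects $R_F\otimes_R K(\mathfrak p)$, note that all thick ideals are radical, and conclude with Balmer's classifying-support-datum theorem. Your worry about rigidity is unfounded, since Balmer's Theorem~5.2 is stated for arbitrary tensor triangulated categories in terms of radical thick ideals; so the direct fallback argument and the extra claim that compact objects generate $\Psi(V)$ are harmless but unnecessary.
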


\begin{proof}
We first show that thick ideals of $\Perf(RB)$ are classified by specialization-closed subsets of $\Spec R\times L_B$, via the mutually inverse assignments
\[
\mathcal I\longmapsto
\bigcup_{X\in\mathcal I}\operatorname{Supp}_B(X),
\qquad
Z\longmapsto
\{X\in\Perf(RB):\operatorname{Supp}_B(X)\subseteq Z\}.
\]

If $\mathcal I$ is a thick ideal of $\Perf(RB)$, then $\Loc\langle\mathcal I\rangle$ is a localizing ideal of $D(RB)$. Indeed, for each $X\in\mathcal I$, the objects $Y$ satisfying
$X\otimes_R^{\mathbf L}Y\in\Loc\langle\mathcal I\rangle$ form a localizing subcategory containing every compact object, and hence all of $D(RB)$.
Thus for each $Y\in D(RB)$, the objects $X$ satisfying the same condition form a localizing subcategory containing $\mathcal I$, and hence $\Loc\langle\mathcal I\rangle$.
The identity
\[
\Loc\langle\mathcal I\rangle\cap\Perf(RB)=\mathcal I
\]
and Theorem~\ref{thm3.6} show that $\mathcal I$ is determined by the union of the tensor supports of its objects. This union is specialization-closed, since every compact
object has closed support by Lemma~\ref{lem3.2}.

Conversely, let $Z\subseteq\Spec R\times L_B$ be specialization-closed. For each $(\mathfrak p,F)\in Z$, choose finitely many generators of
$\mathfrak p$ and let $K(\mathfrak p)$ be the Koszul complex. Then the compact object $R_F\otimes_R K(\mathfrak p)$ has tensor support
\[
V(\mathfrak p)\times\mathord\downarrow F
=\overline{\{(\mathfrak p,F)\}}\subseteq Z.
\]
Thus the thick ideal generated by $R_F\otimes_R K(\mathfrak p)$, as $(\mathfrak p,F)$ ranges over $Z$, has union of tensor supports equal to $Z$. Together with the preceding paragraph, this proves the classification of thick ideals.

Note that every thick ideal is radical. Then $(\Spec R\times L_B,\operatorname{Supp}_B)$ is a  classifying support datum in the sense of \cite[Definition~5.1]{Balmer}.
By \cite[Theorem~5.2]{Balmer}, the canonical map
\[
(\mathfrak p,F)\longmapsto
\{X\in\Perf(RB):
(\mathfrak p,F)\notin\operatorname{Supp}_B(X)\}
=\mathcal P_{\mathfrak p,F}
\]
is a homeomorphism. Under this homeomorphism, $\operatorname{Supp}_B(X)$ corresponds to the Balmer support of $X$ for every $X\in\Perf(RB)$. This concludes the proof.
\end{proof}

At the end of this section, we show that there exists a band algebra $RB$ such that the generalization of Balmer's Nerves of Steel Conjecture to non-rigid setting fails for $\Perf(kB)$.  We refer the reader to \cite{BalmerHomological, BalmerBigSupport} for more details on this conjecture, and to  the recent works \cite{BHR26, Coulembier26, Hyslop} for various counterexamples.
\begin{example}\label{ex3.8}
Let $k$ be a field and let $B=\{1,b\}$ with $b^2=b$. Then \(L_B=\{B,\{b\}\}\) with \( B<\{b\}.\) The element $b$ acts as zero on $k_B$ and as the identity
on $k_{\{b\}}=k$. Write $\mathcal K=\Perf(kB)$ and let $h:\mathcal K\to\operatorname{mod}\mathcal K$ denote the Yoneda embedding. Since $kB\cong k\times k$
as algebras, every object of $\mathcal K$ is a finite direct sum of shifts of $k_B$ and $k_{\{b\}}$. Moreover,
\[
\operatorname{Hom}_{\mathcal K}(k_B[n],k_B[m])
\cong
\operatorname{Hom}_{\mathcal K}
(k_{\{b\}}[n],k_{\{b\}}[m])
\cong
\begin{cases}
k,&n=m,\\
0,&n\ne m,
\end{cases}
\]
and
\[
\operatorname{Hom}_{\mathcal K}
(k_B[n],k_{\{b\}}[m])
=
\operatorname{Hom}_{\mathcal K}
(k_{\{b\}}[n],k_B[m])
=0.
\]
Thus the Freyd envelope $\operatorname{mod}\mathcal K$ is semisimple, with simple objects \(h(k_B[n]), h(k_{\{b\}}[n])\) for each $n\in\mathbb Z$. A proper Serre ideal of $\operatorname{mod}\mathcal K$ cannot contain any $h(k_{\{b\}}[n])$, since tensoring with $h(k_{\{b\}}[-n])$ would give the tensor unit.
On the other hand,
\[
\mathcal S:=\operatorname{add}
\{h(k_B[n]):n\in\mathbb Z\}
\]
is a proper Serre ideal such that every proper Serre  ideal is contained in $\mathcal S$, thus $\mathcal S$ is the unique homological prime and hence
\(\Spc^{\mathrm h}(\mathcal K)=\{\mathcal S\}.\) By Theorem~\ref{thm3.7}, \(\Spc(\mathcal K)\) has two points, thus the natural comparison map
\[
\Spc^{\mathrm h}(\mathcal K)\rightarrow \Spc(\mathcal K)
\]
could not be a homeomorphism.
\end{example}

\section{The tensor abelian spectrum}
\label{sec4}

Let $R$ be a commutative Noetherian ring and $B$ be a finite left regular band. Write $\operatorname{mod}RB$ for the abelian category of finitely generated $RB$-modules.
The ordinary diagonal tensor product on $\operatorname{mod}RB$ is right exact.  A \emph{Serre  ideal} of $\operatorname{mod}RB$ is a Serre subcategory closed under tensoring with arbitrary objects of $\operatorname{mod}RB$. A Serre ideal $\mathcal P$ is \emph{prime} if it is proper and $M\otimes_R N\in\mathcal P$ implies
$M\in\mathcal P$ or $N\in\mathcal P$. Let $\operatorname{Spc}(\operatorname{mod}RB)$ denote the set of these prime ideals, endowed with the topology having
\(\operatorname{supp}(M)=\{\mathcal P\in\operatorname{Spc}(\operatorname{mod}RB):M\notin\mathcal P\}\) as a basis of closed subsets. For further details on tensor abelian geometry, we refer the reader to \cite{BKS07,Xu}.

\begin{theorem}[{\cite[Section~9]{BKS07}}]
\label{thm4.1}
Let $R$ be a commutative Noetherian ring. Then $\operatorname{Spc}(\operatorname{mod}R)\cong \operatorname{Spec}R$. Moreover, Serre ideals of $\operatorname{mod}R$
are classified by specialization-closed subsets
$Z\subseteq\operatorname{Spec}R$ via
\[
Z\longmapsto
\mathcal C_Z
=\{N\in\operatorname{mod}R:
\operatorname{supp}_R N\subseteq Z\}
\]
and the prime Serre ideals of $\operatorname{mod}R$ are precisely \(\{N\in\operatorname{mod}R:N_{\mathfrak p}=0\}\) for $\mathfrak p\in\operatorname{Spec}R$.
\end{theorem}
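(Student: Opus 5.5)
Theorem~\ref{thm4.1} is a cited result of Balmer--Krause--Stevenson; we sketch how one would prove it directly, in three steps.

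\textbf{Step 1: every Serre ideal of $\operatorname{mod}R$ is determined by supports.} Here $\otimes_R$ is the ordinary tensor product and the unit is $R$. Every Serre subcategory of $\operatorname{mod}R$ is automatically a tensor ideal, because $M\otimes_R N$ is a quotient of $M^{\oplus n}$ whenever $N$ is generated by $n$ elements. So it suffices to classify Serre subcategories, which is Gabriel's classification. We would reprove it as follows. Let $\mathcal C$ be a Serre subcategory and $Z=\bigcup_{M\in\mathcal C}\operatorname{supp}_R M$. This set is specialization closed, since supports of finitely generated modules are closed ($=V(\operatorname{Ann}M)$). Conversely, let $N$ be finitely generated with $\operatorname{supp}_R N\subseteq Z$. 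Take a prime filtration of $N$ with subquotients $R/\mathfrak q_i$; each $\mathfrak q_i$ lies in $\operatorname{supp}N\subseteq Z$. Since $\mathcal C$ is closed under extensions, it suffices to show $R/\mathfrak q\in\mathcal C$ for every $\mathfrak q\in Z$.

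\textbf{Step 2: showing $R/\mathfrak q\in\mathcal C$.} This is the key step. Pick $M\in\mathcal C$ with $\mathfrak q\in\operatorname{supp}M$, so $\operatorname{Ann}M\subseteq\mathfrak q$. Since $M\otimes_R R/\mathfrak q\cong M/\mathfrak qM$ is a quotient of $M$, it lies in $\mathcal C$. Its support is $\operatorname{supp}M\cap V(\mathfrak q)=V(\mathfrak q)$, so it is nonzero at $\mathfrak q$. Hence $\mathfrak q$ is a minimal prime of $\operatorname{Ann}(M/\mathfrak qM)$, and therefore an associated prime of $M/\mathfrak qM$. This gives an injection $R/\mathfrak q\hookrightarrow M/\mathfrak qM$, so $R/\mathfrak q\in\mathcal C$ by closure under subobjects. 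Thus the two assignments $\mathcal C\mapsto Z$ and $Z\mapsto\mathcal C_Z$ are mutually inverse; $\mathcal C_Z$ is Serre because supports behave well on short exact sequences.

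\textbf{Step 3: primes and the topology.} A Serre ideal $\mathcal C_Z$ is prime if and only if, for finitely generated $M,N$, the condition $\operatorname{supp}M\cap\operatorname{supp}N\subseteq Z$ forces one of the supports into $Z$. Here we use $\operatorname{supp}(M\otimes_R N)=\operatorname{supp}M\cap\operatorname{supp}N$, which holds by Nakayama. Testing with $M=R/\mathfrak p$ and $N=R/\mathfrak p'$ shows that the complement $U=\operatorname{Spec}R\setminus Z$ must be closed under generization and irreducible in the following sense: if $\mathfrak p,\mathfrak p'\in U$, then $V(\mathfrak p)\cap V(\mathfrak p')=V(\mathfrak p+\mathfrak p')$ meets $U$. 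Using Noetherianity, every minimal element of $Z^c$ is then a point $\mathfrak m$ contained in every member of $U$. Hence $U=\{\mathfrak q:\mathfrak q\subseteq\mathfrak m\}$, so $Z=\{\mathfrak q:\mathfrak q\not\subseteq\mathfrak m\}$ and $\mathcal C_Z=\{N:N_{\mathfrak m}=0\}$. The converse, that each such category is prime, holds because $(M\otimes N)_{\mathfrak p}=0$ implies $M_{\mathfrak p}=0$ or $N_{\mathfrak p}=0$, again by Nakayama over the local ring. Finally, $\operatorname{supp}(M)$ corresponds to $\operatorname{supp}_R M$, which gives the homeomorphism.

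The main obstacle is the primality analysis in Step 3, specifically the existence of the distinguished point $\mathfrak m$. We would handle it by noting that $U$ is closed under generization and is ``directed'' for containments of $V(\mathfrak p+\mathfrak p')$. Then we take $\mathfrak m$ maximal in $U$ and use the directedness to show that any $\mathfrak p'\in U$ satisfies $\mathfrak p'\subseteq\mathfrak m$.
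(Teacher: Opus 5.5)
Your proof is correct. The paper, however, gives no proof of this statement: it quotes it from \cite[Section~9]{BKS07}. There it comes, essentially, from Gabriel's classification of Serre subcategories of $\operatorname{mod}R$ by specialization-closed subsets, fed into Buan--Krause--Solberg's general lattice-theoretic machinery that identifies prime ideals with points. The paper reuses this same classify-then-apply-the-general-theorem pattern in Theorems~\ref{thm3.7} and~\ref{thm4.2}.

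Your route is genuinely different. Steps~1--2 reprove Gabriel's classification by hand, using prime filtrations together with the associated-prime argument on $M/\mathfrak qM$ to put $R/\mathfrak q$ into $\mathcal C$. Step~3 replaces the abstract identification of primes with a direct Noetherian argument. Testing primality on $R/\mathfrak p\otimes_R R/\mathfrak p'$ makes $U=\Spec R\setminus Z$ ``directed'', and an inclusion-maximal element of $U$ then contains every member of $U$. This gives a self-contained, elementary proof. The citation buys brevity and a framework that transfers directly to $\operatorname{mod}RB$.

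Some presentation points:
\begin{itemize}
\item In Step~3, the sentence ``every minimal element of $Z^c$ is a point $\mathfrak m$ contained in every member of $U$'' has the order reversed. What you need, and what your last paragraph does correctly, is this. Take $\mathfrak m$ inclusion-maximal in $U$; it exists because $U\neq\varnothing$ by properness and $R$ is Noetherian. For $\mathfrak p'\in U$, pick $\mathfrak q\in U\cap V(\mathfrak m+\mathfrak p')$. Maximality forces $\mathfrak q=\mathfrak m$, so $\mathfrak p'\subseteq\mathfrak m$.
\item For the homeomorphism, record that $\mathfrak p\mapsto\{N:N_{\mathfrak p}=0\}$ is injective. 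For instance, $R/\mathfrak q$ lies in this ideal iff $\mathfrak q\not\subseteq\mathfrak p$.
\item Also record that every closed set $V(I)\subseteq\Spec R$ equals $\supp_R(R/I)$, so the two bases of closed sets correspond exactly.
\end{itemize}
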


\begin{theorem}\label{thm4.2}
Let $R$ be a commutative Noetherian ring and $B$ a finite left regular band. Then there is a homeomorphism $\Spec R\times L_B\xrightarrow{\sim}
\operatorname{Spc}(\operatorname{mod}RB)$
given by
\[
(\mathfrak p,F)\longmapsto
\mathcal P^{\mathrm S}_{\mathfrak p,F}
=\{M\in\operatorname{mod}RB:
(\omega_F(M))_{\mathfrak p}=0\}.
\]
\end{theorem}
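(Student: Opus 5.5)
We will mirror the proof of Theorem~\ref{thm3.7}, with the module-level support
\[
\operatorname{Supp}^{\mathrm S}(M):=\{(\mathfrak p,F):(\omega_F(M))_{\mathfrak p}\neq0\}
=\bigcup_{G\in L_B}\supp_R(e_GM)\times\mathord\downarrow G
\]
in place of $\operatorname{Supp}_B$. The displayed equality follows exactly as in Lemma~\ref{lem3.2}(1). Apply the exact functor $\omega_F$ to the filtration of Lemma~\ref{lem2.11} of $M$, whose quotients are $R_G\otimes_R e_GM$. Then localize at $\mathfrak p$, which is exact.

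We first record the basic properties. Since each $e_GM$ is a finitely generated $R$-module, $\supp_R(e_GM)$ is closed, so $\operatorname{Supp}^{\mathrm S}(M)$ is closed and stable. Exactness of $\omega_F$ and of localization shows that $\operatorname{Supp}^{\mathrm S}$ is additive on short exact sequences, meaning the middle term's support is the union of the outer ones. For tensor products, $\omega_F(M\otimes_RN)=\omega_F(M)\otimes_R\omega_F(N)$, and for finitely generated $R$-modules $\supp(A\otimes_RB)=\supp A\cap\supp B$ by Nakayama. This gives $\operatorname{Supp}^{\mathrm S}(M\otimes_RN)=\operatorname{Supp}^{\mathrm S}(M)\cap\operatorname{Supp}^{\mathrm S}(N)$. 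Consequently each $\mathcal P^{\mathrm S}_{\mathfrak p,F}$ is a prime Serre ideal: it is proper because it does not contain $R$.

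The main step is the classification of Serre ideals. We claim that $\mathcal C\mapsto\bigcup_{M\in\mathcal C}\operatorname{Supp}^{\mathrm S}(M)$ and $Z\mapsto\{M:\operatorname{Supp}^{\mathrm S}(M)\subseteq Z\}$ are inverse bijections between Serre ideals and specialization-closed subsets. Realizing each $Z$ is easy: $(R/\mathfrak p)_F=R_F\otimes_RR/\mathfrak p$ has support $V(\mathfrak p)\times\mathord\downarrow F$.

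The hard direction is to show that if $\operatorname{Supp}^{\mathrm S}(M)\subseteq\bigcup_{N\in\mathcal C}\operatorname{Supp}^{\mathrm S}(N)$, then $M\in\mathcal C$. Using the filtration and closure of $\mathcal C$ under subquotients and extensions, it suffices to treat $M=R_G\otimes_R e_GM$. Via Theorem~\ref{thm4.1} this reduces further to showing that $(R/\mathfrak p)_G\in\mathcal C$ whenever $(\mathfrak p,G)\in\operatorname{Supp}^{\mathrm S}(N)$ for some $N\in\mathcal C$. This uses the fact that a finitely generated $R$-module has a prime filtration with quotients $R/\mathfrak q$, $\mathfrak q\in\supp$.

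To obtain $(R/\mathfrak p)_G$ from $N$, we will imitate the induction in Lemma~\ref{lem3.3}. Consider $R_G\otimes_RN\in\mathcal C$; its filtration quotients are indexed by $H\leq G$, and the top quotient $R_G\otimes e_G(R_G\otimes N)$ is a quotient module of $R_G\otimes_RN$. Indeed, the filtration pieces $M_{\geq i}$ are submodules, and the $G$-piece is the last layer, so the top quotient is a quotient of the whole module. Also $e_G(R_G\otimes N)\cong\omega_G(N)$ as $R$-modules, and $\mathfrak p\in\supp\omega_G(N)$. So $\mathcal C$ contains $R_G\otimes_R\omega_G(N)$, hence, tensoring with $R/\mathfrak p$, also $R_G\otimes_R(\omega_G(N)/\mathfrak p\omega_G(N))$.

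Now work over $\operatorname{mod}R$. The full subcategory $\{A\in\operatorname{mod}R: R_G\otimes_RA\in\mathcal C\}$ is a Serre ideal of $\operatorname{mod}R$, because tensoring with $R_G$ is exact on $R$-modules and $\mathcal C$ is closed under tensoring with trivial-action modules. By Theorem~\ref{thm4.1} it contains $R/\mathfrak p$, since $\mathfrak p$ lies in the support of a member. This avoids the induction over smaller $H$ altogether, and the quotient step is where care is needed. If only submodule/quotient order issues arise, we would instead use the opposite end of the filtration together with the exactness of $e_G$.

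With the classification in hand, every Serre ideal is determined by its support. The primes correspond to irreducible closed sets' complements, and the irreducible closed sets are exactly the closures $V(\mathfrak p)\times\mathord\downarrow F$. A Balmer-style argument (a classifying support datum, as in \cite[Theorem~5.2]{Balmer} adapted, or directly) then identifies $\operatorname{Spc}(\operatorname{mod}RB)$ with $\{\mathcal P^{\mathrm S}_{\mathfrak p,F}\}$. Under this identification $\operatorname{supp}(M)$ corresponds to $\operatorname{Supp}^{\mathrm S}(M)$, which gives the homeomorphism. Injectivity of $(\mathfrak p,F)\mapsto\mathcal P^{\mathrm S}_{\mathfrak p,F}$ follows by testing against $(R/\mathfrak p)_F$.
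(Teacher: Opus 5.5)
Your proposal is correct and follows essentially the paper's route. Both arguments classify Serre ideals of $\operatorname{mod}RB$ by specialization-closed subsets of $\Spec R\times L_B$: they pull back along the exact functors $N\mapsto R_F\otimes_R N$ to Serre ideals of $\operatorname{mod}R$, invoke Theorem~\ref{thm4.1}, and conclude from the resulting classifying support datum, for which the right abelian reference is \cite[Corollary~6.2]{BKS07} rather than an adaptation of \cite[Theorem~5.2]{Balmer}. The only real difference is that the paper gets the key comparison from monotonicity of the sets $Z_F$ (via $R_G\otimes_R R_F\cong R_G$ for $G\leq F$), where you instead extract $R_G\otimes_R\omega_G(N)$ as the top layer of $R_G\otimes_R N$; both work. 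Your hedge about quotient order is unnecessary, since every filtration layer is a subquotient. Also, the prime attached to a point $x$ corresponds to the complement of the set of generalizations of $x$, not the complement of the irreducible closed set $\overline{\{x\}}$.
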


\begin{proof}
Let $M\in\operatorname{mod}RB$. Then Lemma~\ref{lem3.2} shows that $\operatorname{Supp}_B(M)$ is closed in $\Spec R\times L_B$. Exactness of $\omega_F$ implies that
\(\operatorname{Supp}_B(M)=\operatorname{Supp}_B(M')\cup
\operatorname{Supp}_B(M'')\) for every short exact sequence $0\to M'\to M\to M''\to0$. Strong monoidality of $\omega_F$ and Nakayama's lemma over
each $R_{\mathfrak p}$ show that
\[
\operatorname{Supp}_B(M\otimes_R N)
=\operatorname{Supp}_B(M)\cap\operatorname{Supp}_B(N).
\]
In particular, for every specialization-closed subset $Z\subseteq\Spec R\times L_B$, the subcategory $\{M\in\operatorname{mod}RB:\operatorname{Supp}_B(M)\subseteq Z\}$ is a Serre ideal.

We now claim that Serre ideals of $\operatorname{mod}RB$ are classified by specialization-closed subsets of $\Spec R\times L_B$, via the mutually inverse assignments
\[
\mathcal I\longmapsto
\bigcup_{M\in\mathcal I}\operatorname{Supp}_B(M),
\qquad
Z\longmapsto
\{M\in\operatorname{mod}RB:
\operatorname{Supp}_B(M)\subseteq Z\}.
\]

Let $\mathcal I$ be a Serre ideal. For $F\in L_B$, write
\[
\mathcal C_F:=\{N\in\operatorname{mod}R:
R_F\otimes_R N\in\mathcal I\}.
\]
The functor $N\mapsto R_F\otimes_R N$ is exact, so $\mathcal C_F$ is a Serre ideal. By Theorem~\ref{thm4.1}, there is a unique specialization-closed subset $Z_F\subseteq\Spec R$ such that
\[
\mathcal C_F
=\{N\in\operatorname{mod}R:
\operatorname{supp}_R N\subseteq Z_F\}.
\]
Note that, for $G\leq F$, the identity $R_G\otimes_R R_F\cong R_G$ implies $\mathcal C_F\subseteq\mathcal C_G$, and hence $Z_F\subseteq Z_G$.
Since each $Z_F$ is specialization-closed, so is $\bigcup_{F\in L_B}Z_F\times\{F\}$. The finite  filtration of $M$ and Lemma~\ref{lem3.2} then give
\[
\mathcal I
=\{M\in\operatorname{mod}RB:
\operatorname{Supp}_B(M)\subseteq
\bigcup_{F\in L_B}Z_F\times\{F\}\}.
\]
For every $(\mathfrak p,F)\in \bigcup_{F\in L_B}Z_F\times\{F\}$, specialization closure gives $V(\mathfrak p)\subseteq Z_F$. Hence
$R_F\otimes_R R/\mathfrak p\in\mathcal I$, and \(\operatorname{Supp}_B(R_F\otimes_R R/\mathfrak p)=V(\mathfrak p)\times\mathord\downarrow F.\) Thus we have
$\bigcup_{M\in\mathcal I}\operatorname{Supp}_B(M)=\bigcup_{F\in L_B}Z_F\times\{F\}$.
Conversely, let $Z\subseteq\Spec R\times L_B$ be specialization-closed. For each $(\mathfrak p,F)\in Z$, the module $R_F\otimes_R R/\mathfrak p$ has support
$V(\mathfrak p)\times\mathord\downarrow F\subseteq Z$, so it belongs to the Serre ideal $\{M:\operatorname{Supp}_B(M)\subseteq Z\}$.
Then the tensor support of $\{M\in\operatorname{mod}RB:\operatorname{Supp}_B(M)\subseteq Z\}$ is exactly $Z$. This proves  the claim.

Under the classification above, $(\Spec R\times L_B,\operatorname{Supp}_B)$ is a classifying support datum on $\operatorname{mod}RB$. The conclusion then follows immediately from \cite[Corollary~6.2]{BKS07}.
\end{proof}

\begin{corollary}\label{cor4.3}
Let $R$ be a commutative Noetherian ring and $B$ a finite left regular band. Then there is a canonical homeomorphism
\[
\operatorname{Spc}(\operatorname{mod}RB)\longrightarrow\Spc(\Perf(RB)),
\qquad \mathcal P\longmapsto\{C\in\Perf(RB):H^i(C)\in\mathcal P\text{ for all }i\in\mathbb Z\}.
\]
\end{corollary}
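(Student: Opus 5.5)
The plan is to compose the two homeomorphisms already established and then identify the composite with the stated map. By Theorem~\ref{thm4.2}, $(\mathfrak p,F)\mapsto\mathcal P^{\mathrm S}_{\mathfrak p,F}$ is a homeomorphism $\Spec R\times L_B\to\operatorname{Spc}(\operatorname{mod}RB)$. By Theorem~\ref{thm3.7}, $(\mathfrak p,F)\mapsto\mathcal P_{\mathfrak p,F}$ is a homeomorphism $\Spec R\times L_B\to\Spc\Perf(RB)$. Hence the composite $\mathcal P^{\mathrm S}_{\mathfrak p,F}\mapsto\mathcal P_{\mathfrak p,F}$ is a homeomorphism. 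It remains to check that this composite is the displayed map, that is, to prove
\[
\mathcal P_{\mathfrak p,F}=\{C\in\Perf(RB):H^i(C)\in\mathcal P^{\mathrm S}_{\mathfrak p,F}\text{ for all }i\}.
\]

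For this identity, fix $C\in\Perf(RB)$. Each $H^i(C)$ is finitely generated over $R$, hence lies in $\operatorname{mod}RB$, so the right-hand condition makes sense. Since $\omega_F$ is exact, $H^i(\omega_F(C))\cong\omega_F(H^i(C))$. Localization at $\mathfrak p$ is exact, so
\[
H^i\bigl(\omega_F(C)_{\mathfrak p}\bigr)\cong\bigl(\omega_F(H^i(C))\bigr)_{\mathfrak p}.
\]
Therefore $H^i(C)\in\mathcal P^{\mathrm S}_{\mathfrak p,F}$ for all $i$ if and only if $\omega_F(C)_{\mathfrak p}\simeq0$ in $D(R_{\mathfrak p})$.

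Now $\omega_F(C)=b_FC$ is a direct summand of $C$ as an $R$-complex, and $C$ is perfect over $R$ by Proposition~\ref{prop2.12}. Hence $\omega_F(C)$ is perfect over $R$, and $\omega_F(C)_{\mathfrak p}$ is perfect over the local ring $R_{\mathfrak p}$. For a perfect complex $P$ over a local ring, $P\simeq0$ if and only if $\kappa(\mathfrak p)\otimes^{\mathbf L}P\simeq0$. This holds by Nakayama applied to a minimal free resolution, or equivalently because $\mathfrak p\in\supp_R(\omega_F(C))$ if and only if $\mathfrak p$ lies in the usual homological support, which is valid for compact objects. Thus the condition is equivalent to $\kappa(\mathfrak p)\otimes_R^{\mathbf L}\omega_F(C)=0$, i.e.\ to $C\in\mathcal P_{\mathfrak p,F}$.

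The only step needing care is this last one: the passage from vanishing of the localized homology to vanishing of the derived fibre at $\kappa(\mathfrak p)$. It uses perfectness of $\omega_F(C)$ essentially, and it fails for unbounded non-perfect complexes. I would justify it via the minimal-complex form of Nakayama over $R_{\mathfrak p}$, or by quoting that for compact objects of $D(R)$ the Balmer--Favi support agrees with the closed support $\bigcup_i\operatorname{Supp}H^i$. Everything else is formal. In particular, canonicity follows because the map is defined intrinsically by homology, independently of the parametrization.
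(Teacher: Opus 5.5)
Your proposal is correct and follows the paper's implicit argument: the paper states this as a direct corollary of composing the homeomorphisms of Theorems~\ref{thm4.2} and~\ref{thm3.7}, without writing out a proof. Your check that $\{C\in\Perf(RB):H^i(C)\in\mathcal P^{\mathrm S}_{\mathfrak p,F}\ \forall i\}=\mathcal P_{\mathfrak p,F}$ supplies the identification the paper leaves unstated, using exactness of $\omega_F$ and of localization together with Nakayama for the perfect complex $\omega_F(C)_{\mathfrak p}$.
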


\section{The telescope conjecture}\label{sec5}

In this section, we classify all smashing ideals of $D(RB)$, where $R$ is a commutative Noetherian ring and $B$ is a finite left regular band. As a consequence, we prove that the telescope conjecture holds for $D(RB)$. When $B$ is trivial, this recovers Neeman's theorem.
\begin{notation}
For a stable subset $W\subseteq\Spec R\times L_B$ and $F\in L_B$, write
\[
W_F=\{\mathfrak p\in\Spec R:(\mathfrak p,F)\in W\}.
\]
For $V\subseteq\Spec R$, write
\[
D_V(R)=\{N\in D(R):\supp_R(N)\subseteq V\}.
\]
\end{notation}

\begin{lemma}\label{lem5.2}
Let $W\subseteq\Spec R\times L_B$ be stable and let $\Phi$, $\Psi$ be as in Theorem~\ref{thm3.6}. If $\Psi(W)$ is a smashing ideal, then $D_{W_F}(R)$ is a smashing ideal of $D(R)$ for every $F\in L_B$.
\end{lemma}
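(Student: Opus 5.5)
The idea is to transport the smashing property along the exact, coproduct-preserving functors relating $D(R)$ and $D(RB)$. For fixed $F$ I would use $\iota_F:=R_F\otimes_R-\colon D(R)\to D(RB)$ (inflation along $\chi_F$) and $\omega_F\colon D(RB)\to D(R)$. For $N\in D(R)$, Lemma~\ref{lem3.2}(1) gives $\operatorname{Supp}_B(R_F\otimes_R N)=\supp_R(N)\times\mathord\downarrow F$, because $e_G(R_F\otimes N)$ is $N$ for $G=F$ and $0$ otherwise. Since $W$ is stable, this yields
\[
N\in D_{W_F}(R)\iff R_F\otimes_R N\in\Psi(W),
\]
that is, $D_{W_F}(R)=\iota_F^{-1}(\Psi(W))$. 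This already shows that $D_{W_F}(R)$ is a localizing ideal. In a rigidly compactly generated category such as $D(R)$, smashing means that the right orthogonal $D_{W_F}(R)^{\perp}$ is closed under coproducts, so that is the property I have to establish.

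The second step takes the localization triangle $\Gamma X\to X\to LX$ in $D(RB)$ for $\Psi(W)$, which exists because $\Psi(W)$ is smashing. Every localizing ideal generated by a set is Bousfield, and $\Psi(W)$ is generated by a set by Theorem~\ref{thm3.6}. Smashing means $\Psi(W)^\perp$ is localizing, hence $L$ preserves coproducts. I then want to understand $\Psi(W)^\perp$ after restriction along $\iota_F$. The functor $\iota_F$ has a right adjoint $\rho_F=\RHom_{RB}(R_F,-)$, which is exact, and $\rho_F$ preserves coproducts because $R_F$ is compact by Proposition~\ref{prop2.10}. Consequently $\rho_F$ sends $\Psi(W)^\perp$ into $D_{W_F}(R)^\perp$: for $N\in D_{W_F}(R)$ and $Y\in\Psi(W)^\perp$ we have $\operatorname{Hom}(N,\rho_FY)=\operatorname{Hom}(\iota_FN,Y)=0$.

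The key step is to show that $D_{W_F}(R)^\perp$ is generated under the relevant operations by $\rho_F(\Psi(W)^\perp)$. Concretely, for $M\in D(R)$ I would take $Y=L(\iota_F M)$ and compare $\rho_F L\iota_F M$ with the localization $L_F M$ of $M$ in $D(R)$ away from $D_{W_F}(R)$. The unit $M\to\rho_F\iota_FM$ can be analysed using the fact that $e_F\iota_F M=M$ and $\rho_F(-)$ is computed via the projective resolution of $R_F$ from Proposition~\ref{prop2.10}. I would check that $\operatorname{cone}(M\to\rho_F L\iota_F M)$ lies in $D_{W_F}(R)$. Granting this, $L_F\simeq\rho_F L\iota_F$ is a composite of coproduct-preserving functors, hence preserves coproducts, and $D_{W_F}(R)$ is smashing.

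If that comparison is messy, there is an alternative that avoids adjoints: apply $\omega_F$. Since $\omega_F$ is strong monoidal and exact, $\omega_F\Gamma(\iota_F M)\to M\to\omega_F L(\iota_F M)$ is a triangle in $D(R)$ with $\omega_F\iota_F=\mathrm{id}$. Here $\omega_F\Gamma\iota_FM\in D_{W_F}(R)$, because its support at level $F$ is read off from $\operatorname{Supp}_B$. It remains to show that $\omega_F L\iota_F M\in D_{W_F}(R)^\perp$, which I would deduce from $\operatorname{Hom}(N,\omega_F Y)\cong\operatorname{Hom}(\iota_F N,Y')$ for a suitable $Y'$, using $\omega_F=\operatorname{Hom}_{RB}(RBb_F,-)$. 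Either way, the main obstacle is the orthogonality claim: proving that the $F$-component of the $\Psi(W)$-local object is $D_{W_F}(R)$-local. This is where stability of $W$ and the filtration of Lemma~\ref{lem2.11} must be used carefully. Once it holds, $\omega_F L\iota_F$ is the localization functor and visibly preserves coproducts.
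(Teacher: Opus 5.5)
\textbf{There is a genuine gap: the step you grant is the whole lemma.} What you actually prove is correct but formal:
\begin{itemize}
\item $D_{W_F}(R)=\iota_F^{-1}(\Psi(W))$;
\item $\rho_F=\RHom_{RB}(R_F,-)$ preserves coproducts and $\rho_F(\Psi(W)^\perp)\subseteq D_{W_F}(R)^\perp$;
\item $\omega_F(\Psi(W))\subseteq D_{W_F}(R)$ and $\omega_F\iota_F=\mathrm{id}$.
\end{itemize}
You then grant that $\operatorname{cone}(M\to\rho_FL\iota_FM)\in D_{W_F}(R)$, or that $\omega_FL\iota_FM\in D_{W_F}(R)^\perp$, so the statement is not yet proved. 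The inclusion $\rho_F(\Psi(W)^\perp)\subseteq D_{W_F}(R)^\perp$ points the wrong way for showing $D_{W_F}(R)^\perp$ is closed under coproducts. Moreover, the obvious ways to close the gap are false.

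Take $B=\{1,x,y\}$ with $x^2=xy=x$ and $y^2=yx=y$. Then $L_B=\{F_0<F_1\}$ with $F_0=\sigma(1)$ and $F_1=\sigma(x)=\sigma(y)$. Take the stable, specialization-closed set $W=\Spec R\times\{F_0\}$.
\begin{itemize}
\item From $0\to R_{F_0}\to RBx\to R_{F_1}\to0$, with $R_{F_0}\cong RB(1-x)$ projective, one gets $\rho_{F_1}(R_{F_0})\cong R[-1]\neq0$. Yet $R_{F_0}\in\Psi(W)$ and $W_{F_1}=\varnothing$. So $\rho_F$ does not carry $\Psi(W)$ into $D_{W_F}(R)$, and you cannot conclude $\rho_F\Gamma\iota_FM\in D_{W_F}(R)$ from $\Gamma\iota_FM\in\Psi(W)$ alone.
\item $R_{F_1}\in\Psi(W)^\perp$, but $\omega_{F_0}(R_{F_1})=R\notin D_{W_{F_0}}(R)^\perp=0$. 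So $\omega_F$ does not send local objects to local objects.
\item The adjunction that $\omega_F=\operatorname{Hom}_{RB}(RBb_F,-)$ visibly supplies is with its \emph{left} adjoint $RBb_F\otimes_R-$. This does not map $D_{W_F}(R)$ into $\Psi(W)$: for $F=F_0$ one has $b_{F_0}=1$ and $RB\otimes_RR=RB\notin\Psi(W)$. You also never specify your $Y'$.
\end{itemize}
Stability only gives $W_G\subseteq W_F$ for $G\geq F$, and each of these transports would need inclusions that stability does not provide.

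\textbf{The missing idea.} The paper's argument uses the \emph{right} adjoint of a functor $D(RB)\to D(R)$ that carries $\Psi(W)$ \emph{onto} $D_{W_F}(R)$. The paper uses $e_F(-)$, but your $\omega_F$ works verbatim:
\begin{itemize}
\item You already have $\omega_F(\Psi(W))\subseteq D_{W_F}(R)$ and $\omega_F(R_F\otimes_RN)=N$, so $\omega_F(\Psi(W))=D_{W_F}(R)$ up to isomorphism.
\item Since $\omega_F\cong b_F(RB)\otimes_{RB}-$ with $b_F(RB)$ finitely generated projective over $R$, its right adjoint $\RHom_R(b_F(RB),-)$ preserves coproducts.
\item Adjunction gives $N\in D_{W_F}(R)^\perp$ if and only if $\RHom_R(b_F(RB),N)\in\Psi(W)^\perp$.
\end{itemize}
Closure of $D_{W_F}(R)^\perp$ under coproducts then follows at once from that of $\Psi(W)^\perp$, with no localization functors needed.

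\textbf{Completing your route instead.} The fact to prove is $\iota_F(D_{W_F}(R)^\perp)\subseteq\Psi(W)^\perp$. Let $\lambda_F=R\otimes^{\mathbf L}_{RB}-$ be the left adjoint of $\iota_F$, with $R$ a right $RB$-module via $\chi_F$.
\begin{itemize}
\item One has $\lambda_F(RBe_G)=\delta_{F,G}R$.
\item Induction along the resolution of Proposition~\ref{prop2.10} gives $\lambda_F(R_G\otimes_RN)=0$ unless $F\leq G$.
\item Stability then yields $\lambda_F(\Psi(W))\subseteq D_{W_F}(R)$, which is exactly the needed inclusion by adjunction.
\end{itemize}
Then $\iota_F$ carries the localization triangle of $M$ to that of $\iota_FM$. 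Hence $L\iota_F\cong\iota_FL_F$, and $L_F\cong\omega_FL\iota_F$ preserves coproducts. This is correct but longer than the adjunction argument.
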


\begin{proof}
Note that $W_F\subseteq W_G$ whenever $G\leq F$, and Lemma~\ref{lem3.2} gives
\[
\Psi(W)=
\{X\in D(RB):
\supp_R(e_FX)\subseteq W_F
\text{ for every }F\in L_B\}.
\]
Thus we have $e_FX\in D_{W_F}(R)$ for every $X\in\Psi(W)$. Conversely, for $N\in D_{W_F}(R)$,
Lemma~\ref{lem2.8} gives
\[
e_G(R_F\otimes_R N)\cong
\begin{cases}
N,&G=F,\\
0,&G\ne F.
\end{cases}
\]
Hence $R_F\otimes_R N\in\Psi(W)$ and $e_F(R_F\otimes_R N)\cong N$. Thus we have $e_F(\Psi(W))=D_{W_F}(R)$.

Since \(e_F(-)\cong e_F(RB)\otimes_{RB}^{\mathbf L}-:D(RB)\longrightarrow D(R)\) with $e_F(RB)$ finitely generated projective over $R$, it has a coproduct-preserving right adjoint
\(\RHom_R(e_F(RB),-).\) Then the identity $e_F(\Psi(W))=D_{W_F}(R)$ and the derived adjunction give
\[
N\in D_{W_F}(R)^\perp
\quad\Longleftrightarrow\quad
\RHom_R(e_F(RB),N)\in\Psi(W)^\perp.
\]
Since $\Psi(W)$ is smashing, it follows that $D_{W_F}(R)$ is also smashing. Finally, recall that every localizing subcategory of $D(R)$ is a tensor ideal. Thus $D_{W_F}(R)$ is
a smashing ideal of $D(R)$.
\end{proof}

\begin{theorem}\label{thm5.3}
Let $R$ be a commutative Noetherian ring and $B$ a finite left regular band. Suppose $W\subseteq\Spec R\times L_B$ is stable. Then the following  are equivalent:
\begin{enumerate}[label=\textup{(\roman*)}]
\item $\Psi(W)$ is smashing;
\item $W$ is specialization-closed in $\Spec R\times L_B$;
\item $\Psi(W)$ is generated by compact objects
      as a localizing subcategory.
\end{enumerate}
In particular, $D(RB)$ satisfies the telescope conjecture.
\end{theorem}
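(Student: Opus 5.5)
I will prove the cycle (ii)$\Rightarrow$(iii)$\Rightarrow$(i)$\Rightarrow$(ii). The telescope conjecture then follows directly. By Theorem~\ref{thm3.6}, every smashing ideal has the form $\Psi(W)$ for a stable $W$. By (i)$\Rightarrow$(iii), such an ideal is compactly generated.

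\textbf{(ii)$\Rightarrow$(iii).} Suppose $W$ is specialization-closed. For each $(\mathfrak p,F)\in W$, the compact object $R_F\otimes_R K(\mathfrak p)$ used in the proof of Theorem~\ref{thm3.7} has support $V(\mathfrak p)\times\mathord\downarrow F\subseteq W$. Let $\mathcal C$ be the localizing subcategory generated by these objects. The proof of Theorem~\ref{thm3.7} shows that the localizing subcategory generated by a thick ideal of compacts is a localizing ideal, so $\mathcal C$ is a localizing ideal. By Theorem~\ref{thm3.6}, $\mathcal C=\Psi(\Phi(\mathcal C))$, so it suffices to show $\Phi(\mathcal C)=W$. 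The inclusion $\Phi(\mathcal C)\subseteq W$ holds because $\Psi(W)$ is localizing and contains every generator. The reverse inclusion holds because $(\mathfrak p,F)\in\operatorname{Supp}_B(R_F\otimes_R K(\mathfrak p))$. Hence $\Psi(W)=\mathcal C$ is compactly generated.

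\textbf{(iii)$\Rightarrow$(i).} This is the standard fact that a localizing subcategory generated by a set of compact objects $\mathcal G$ is smashing. Its right orthogonal is $\{Y:\operatorname{Hom}(C[n],Y)=0,\ C\in\mathcal G\}$, which is closed under coproducts because each $C$ is compact. Bousfield localization exists by Brown representability, so the subcategory is smashing in the paper's sense.

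\textbf{(i)$\Rightarrow$(ii).} I expect this to be the main step. By Lemma~\ref{lem5.2}, each $D_{W_F}(R)$ is a smashing ideal of $D(R)$. Neeman's telescope theorem for commutative Noetherian rings then forces each $W_F$ to be specialization-closed in $\Spec R$. It remains to check that $W$ itself is specialization-closed. Take $(\mathfrak p,F)\in W$. Its closure is $V(\mathfrak p)\times\mathord\downarrow F$. For $G\leq F$, stability gives $(\mathfrak p,G)\in W$, so $\mathfrak p\in W_G$. Since $W_G$ is specialization-closed, $V(\mathfrak p)\subseteq W_G$, and therefore $V(\mathfrak p)\times\{G\}\subseteq W$. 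Taking the union over $G\leq F$ gives $\overline{\{(\mathfrak p,F)\}}\subseteq W$. The only subtle input is identifying the smashing subcategories of $D(R)$ as exactly the $D_V(R)$ with $V$ specialization-closed. This is Neeman's theorem, combined with the classification of localizing subcategories of $D(R)$ by arbitrary subsets via $\supp_R$. Everything else is bookkeeping.
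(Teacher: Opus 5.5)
Your (i)$\Rightarrow$(ii) and (iii)$\Rightarrow$(i) match the paper's proof: Lemma~\ref{lem5.2} plus Neeman's smashing classification on each fibre, and the standard fact that compactly generated localizing subcategories are smashing. Your deduction of the telescope conjecture from Theorem~\ref{thm3.6} is also what the paper intends.

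Your (ii)$\Rightarrow$(iii) takes a genuinely different route. The paper argues directly. For $X\in\Psi(W)$, each $e_FX$ lies in $D_{W_F}(R)=\Loc\langle K(\mathfrak p):\mathfrak p\in W_F\rangle$ by Neeman. So each quotient $R_F\otimes_R e_FX$ of the filtration in Lemma~\ref{lem2.11} lies in $\Loc\langle R_G\otimes_R K(\mathfrak p):(\mathfrak p,G)\in W\rangle$, and hence so does $X$. You instead build a localizing ideal generated by compacts whose support is exactly $W$, and let the injectivity of $\Phi$ from Theorem~\ref{thm3.6} finish the job. This is shorter and avoids the Koszul generation of $D_V(R)$. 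The paper's argument gives the sharper statement that the explicit set $\{R_G\otimes_R K(\mathfrak p)\}$ generates $\Psi(W)$ as a plain localizing subcategory, with no ideal-theoretic input.

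One step in your version is not justified as written. You set $\mathcal C=\Loc\langle R_F\otimes_R K(\mathfrak p):(\mathfrak p,F)\in W\rangle$ and assert that it is a localizing ideal by the argument in the proof of Theorem~\ref{thm3.7}. That argument needs the generators to form a thick \emph{ideal} of $\Perf(RB)$, since it uses $X\otimes^{\mathbf L}_R Y\in\mathcal I$ for $X\in\mathcal I$ and $Y$ compact. A bare set of objects does not satisfy this. Without the ideal property you cannot invoke $\Psi\Phi=\mathrm{id}$, because Theorem~\ref{thm3.6} is a statement about localizing ideals only.

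The fix is immediate. Take $\mathcal C=\Loc\langle\mathcal I\rangle$ with $\mathcal I=\mathrm{thick}_\otimes\langle R_F\otimes_R K(\mathfrak p):(\mathfrak p,F)\in W\rangle\subseteq\Perf(RB)$. Then:
\begin{itemize}
\item $\mathcal C$ is a localizing ideal generated by compact objects;
\item $\mathcal C\subseteq\Psi(W)$, because $\Psi(W)$ is a localizing \emph{ideal} containing the generators;
\item $W\subseteq\Phi(\mathcal C)$, as you observed.
\end{itemize}
Hence $\mathcal C=\Psi(W)$.

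Alternatively, you can check that $\mathrm{thick}\langle R_F\otimes_R K(\mathfrak p):(\mathfrak p,F)\in W\rangle$ is already an ideal. For compact $X$, tensor the filtration of Lemma~\ref{lem2.11} with $R_F\otimes_R K(\mathfrak p)$. The quotients are $R_{G\wedge F}\otimes_R(e_GX\otimes_R K(\mathfrak p))$, where $e_GX\otimes_R K(\mathfrak p)\in\mathrm{thick}\langle K(\mathfrak p)\rangle$ because $e_GX$ is perfect over $R$. Finally, $(\mathfrak p,G\wedge F)\in W$ by stability.
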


\begin{proof}
Assume (i).
By Lemma~\ref{lem5.2}, $D_{W_F}(R)$ is smashing for every $F\in L_B$. Neeman's smashing classification \cite[Theorem~3.3]{Neeman} implies that every $W_F$ is specialization-closed.
If $(\mathfrak p,F)\in W$, $\mathfrak p\subseteq\mathfrak q$, and $G\leq F$, then \(\mathfrak q\in W_F\subseteq W_G.\) This proves (ii).

Assume (ii). Then each $W_F$ is specialization-closed, and the compact object $R_F\otimes_R K(\mathfrak p)$ satisfies
\[
\operatorname{Supp}_B(R_F\otimes_R K(\mathfrak p))
=V(\mathfrak p)\times\mathord\downarrow F
\subseteq W
\]
whenever $(\mathfrak p,F)\in W.$ Thus  we have  $R_F\otimes_R K(\mathfrak p)\in\Psi(W)$ for $(\mathfrak p,F)\in W$. On the other hand, by Neeman's classification, we have
\[
D_{W_F}(R)=
\Loc\langle K(\mathfrak p):
\mathfrak p\in W_F\rangle.
\]
It follows that for $N\in D_{W_F}(R)$, we have $R_F\otimes_R N\in \Loc\langle R_F\otimes_R K(\mathfrak p):\mathfrak p\in W_F\rangle$. Now let $X\in\Psi(W)$.
For every $F\in L_B$, one has $e_FX\in D_{W_F}(R)$. Hence every quotient $R_F\otimes_R e_FX$ in the finite filtration of Lemma~\ref{lem2.11} belongs to
\[
\Loc\langle R_G\otimes_R K(\mathfrak p):
(\mathfrak p,G)\in W\rangle.
\]
The triangles associated with this filtration show that $X$ belongs to the same subcategory. This proves  (iii).

The implication (iii)$\Rightarrow$(i) is standard; see, for example, \cite[Theorem~1.7]{HallRydh17}. This concludes the proof.
\end{proof}

\begin{corollary}
The restrictions of $\Phi$ and $\Psi$ give mutually inverse lattice isomorphisms
\[
\left\{\text{smashing ideals of }D(RB)\right\}
\underset{\Psi}{\overset{\Phi}{\rightleftarrows}}
\left\{W\subseteq\Spec R\times L_B:
W\text{ is specialization-closed}\right\}.
\]
\end{corollary}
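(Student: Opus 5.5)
The plan is to derive this from Theorem~\ref{thm5.3} and Theorem~\ref{thm3.6}. By Theorem~\ref{thm3.6}, $\Phi$ and $\Psi$ are mutually inverse bijections between all localizing ideals of $D(RB)$ and all stable subsets. So the task is to check three things: every smashing ideal corresponds to a specialization-closed set, every specialization-closed set is stable, and both maps preserve the order.

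First I will check that specialization-closed subsets are stable. If $(\mathfrak p,F)\in W$ and $G\leq F$, then $(\mathfrak p,G)$ lies in $\overline{\{(\mathfrak p,F)\}}=V(\mathfrak p)\times\mathord\downarrow F\subseteq W$ by the Remark describing closures. Hence every specialization-closed $W$ is in the domain of $\Psi$. By the implication (ii)$\Rightarrow$(i) of Theorem~\ref{thm5.3}, $\Psi(W)$ is then smashing.

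Next, let $\mathcal J$ be a smashing ideal. It is in particular a localizing ideal, so $\mathcal J=\Psi(\Phi(\mathcal J))$ with $\Phi(\mathcal J)$ stable. The implication (i)$\Rightarrow$(ii) of Theorem~\ref{thm5.3} shows that $\Phi(\mathcal J)$ is specialization-closed. Thus the restrictions of $\Phi$ and $\Psi$ land in the stated sets, and they remain mutually inverse because they are restrictions of mutually inverse maps.

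Finally, for the lattice statement, both $\Phi$ and $\Psi$ are visibly inclusion-preserving. An order-preserving bijection whose inverse is also order-preserving is an isomorphism of posets, hence of lattices whenever the meets and joins exist. The only mildly delicate point is what the lattice operations are on each side. On the set side they are union and intersection, and both preserve specialization-closedness. On the smashing side, the meet and join are then forced by the poset isomorphism. If one wants them explicitly: $\Psi(W\cap W')=\Psi(W)\cap\Psi(W')$ is immediate from the support description, and the join is $\Loc\langle\mathcal J\cup\mathcal J'\rangle$, which corresponds to $W\cup W'$. I do not expect any real obstacle, since all the substance lies in Theorem~\ref{thm5.3}.
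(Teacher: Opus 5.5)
Your proposal is correct and follows the paper's route: the paper's proof simply cites Theorems~\ref{thm3.6} and~\ref{thm5.3}, and you restrict the bijection of Theorem~\ref{thm3.6} using both implications (i)$\Leftrightarrow$(ii) of Theorem~\ref{thm5.3}. You also make explicit two points the paper leaves implicit: specialization-closed subsets are stable, which is needed because Theorem~\ref{thm5.3} is stated only for stable $W$; and $\Phi$, $\Psi$ are inclusion-preserving, which upgrades the bijection to a lattice isomorphism.
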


\begin{proof}
This follows from Theorems~\ref{thm3.6} and~\ref{thm5.3}.
\end{proof}

\section{Examples}
\label{sec6}

In this section, we consider examples of band algebras arising from matroids, oriented matroids, finite simple graphs, and finite acyclic quivers. The case of the path algebra $RQ$ is of particular interest, since $\Perf(RQ)$ admits a band tensor product whose associated Balmer spectrum generally differs from that induced by the vertexwise tensor product considered by Sabatini \cite{Sabatini}. For the convenience of readers who are unfamiliar with these combinatorial concepts, we spell out the relevant definitions as much as possible. Theorem~\ref{thm3.7} and Theorem~\ref{thm5.3} apply to each of the examples.  In particular,  the telescope conjecture holds for all band algebras appearing in this section.

\subsection{Ordinary matroids and independent words}
A \emph{finite matroid} is a pair $M=(E,\mathcal I)$, where $E$ is a finite set and $\mathcal I\subseteq 2^E$ is a family of subsets, called \emph{independent sets},
satisfying:
\begin{enumerate}
\item $\varnothing\in\mathcal I$;
\item if $I\in\mathcal I$ and $J\subseteq I$, then
      $J\in\mathcal I$;
\item if $I,J\in\mathcal I$ and $|I|<|J|$, then there
      exists $e\in J\setminus I$ such that
      $I\cup\{e\}\in\mathcal I$.
\end{enumerate}
For $S\subseteq E$, its \emph{rank} is defined by
\(\operatorname{rk}_M(S)
:=\max\{|I|:I\subseteq S,\ I\in\mathcal I\}.\)
The \emph{closure} of $S$ is
\(
\operatorname{cl}_M(S)
:=\{e\in E:
\operatorname{rk}_M(S\cup\{e\})
=\operatorname{rk}_M(S)\}.\)
A subset $F\subseteq E$ is called a \emph{flat} if
$\operatorname{cl}_M(F)=F$.
The set \(\operatorname{Flats}(M):=\{F\subseteq E:\operatorname{cl}_M(F)=F\}\) ordered by inclusion, is a lattice with
\[
F\wedge G=F\cap G,\qquad
F\vee G=\operatorname{cl}_M(F\cup G).
\]
A word $w=e_1\cdots e_r$ is independent if its letters are distinct and $\{e_1,\ldots,e_r\}\in\mathcal I$.  For any word $w$, let $g(w)$ be the word obtained by
reading from left to right and retaining a letter exactly when it is not in the closure of the previously retained letters. Now define
\[
 B(M)=\{\text{independent words on }E\},
 \qquad u\cdot v=g(uv).
\]
By \cite[Section~3.4 and Example~3.5]{MSS}, $B(M)$ is a finite left regular band and there is a canonical lattice isomorphism
\[
L_{B(M)}\xrightarrow{\sim}\operatorname{Flats}(M),
\qquad
B(M)w\longmapsto
\operatorname{cl}_M\{\text{letters of }w\}.
\]
This identifies the topology on $L_{B(M)}$ with the topology on $\operatorname{Flats}(M)$ whose closed subsets are precisely the downward-closed subsets. Then the following result is immediate from Theorem~\ref{thm3.7}.

\begin{proposition}\label{prop6.1}
Let $R$ be a commutative Noetherian ring and $M$ a finite matroid. Then  we have \(\Spc\Perf(RB(M))\cong\Spec R\times\operatorname{Flats}(M).\)
\end{proposition}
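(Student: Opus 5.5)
The plan is to derive Proposition~\ref{prop6.1} directly from Theorem~\ref{thm3.7} applied to the band $B=B(M)$, and then to transport the product topology along the lattice isomorphism $L_{B(M)}\cong\operatorname{Flats}(M)$. Accordingly, the steps are as follows.

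First, I take as input the facts quoted from \cite[Section~3.4 and Example~3.5]{MSS}: that $B(M)$ is a finite left regular band, and that $B(M)w\mapsto\operatorname{cl}_M\{\text{letters of }w\}$ is a lattice isomorphism $L_{B(M)}\to\operatorname{Flats}(M)$. Here $L_{B(M)}$ carries reverse inclusion and $\operatorname{Flats}(M)$ carries inclusion. Theorem~\ref{thm3.7} then gives a homeomorphism $\Spec R\times L_{B(M)}\xrightarrow{\sim}\Spc\Perf(RB(M))$, where $L_{B(M)}$ has the topology whose closed sets are the downward-closed subsets.

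Second, I check that an order isomorphism of finite posets is a homeomorphism for the down-set topologies. This holds because the closed sets are defined purely order-theoretically, so an order isomorphism carries down-sets bijectively to down-sets. The one point needing care is that the map is order-preserving in the correct direction, i.e.\ that $Bu\supseteq Bv$ if and only if $\operatorname{cl}(u)\subseteq\operatorname{cl}(v)$. This is exactly what "lattice isomorphism" means in \cite{MSS}, given that $\sigma(uv)=\sigma(u)\vee\sigma(v)$ corresponds to $\operatorname{cl}(u\cup v)$.

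Finally, I take the product of this homeomorphism with the identity on $\Spec R$, which yields $\Spec R\times L_{B(M)}\cong\Spec R\times\operatorname{Flats}(M)$ as topological spaces. Composing with the homeomorphism from Theorem~\ref{thm3.7} gives the claim.

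The only potential obstacle is the orientation check in the second step. Everything else is formal, so the proof is essentially immediate.
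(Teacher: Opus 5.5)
Your proposal is correct and is the paper's argument: apply Theorem~\ref{thm3.7} to the band $B(M)$, then transport the down-set topology along the lattice isomorphism $L_{B(M)}\cong\operatorname{Flats}(M)$ from \cite{MSS}. Your orientation check is right and is a detail the paper leaves implicit: reverse inclusion on $L_{B(M)}$ matches inclusion of flats, since $\sigma(u)\le\sigma(v)\iff vu=v\iff\operatorname{cl}_M(u)\subseteq\operatorname{cl}_M(v)$.
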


\begin{example}
For $1\leq r\leq n$, the \emph{uniform matroid} $U_{r,n}$ is the matroid on $E=\{1,\ldots,n\}$ whose independent sets are precisely the subsets of cardinality at most $r$:
\[
\mathcal I=\{I\subseteq E:|I|\leq r\}.
\]The flats are the subsets of cardinality less than $r$, together with the entire ground set.  Thus, over a field, the spectrum has
\[
 1+\sum_{j=0}^{r-1}\binom{n}{j}
\]
points. For example, the prime ideals of $\Perf(kB(U_{2,3}))$ correspond bijectively to the flats $\varnothing$, $\{1\}$, $\{2\}$, $\{3\}$, and $E=\{1,2,3\}$.
\end{example}

\begin{example}\label{ex6.3}
Let $q$ be a prime power and $n\geq 0$. Let $M_{n,q}$ be the vector matroid on $\mathbb F_q^n$, whose independent sets are the linearly independent subsets. Then
$B_{n,q}:=B(M_{n,q})$ consists of  linearly independent tuples. This is the band of \cite[Example~3.4]{MSS}.

The flats of $M_{n,q}$ are precisely the linear subspaces of $\mathbb F_q^n$. Hence Proposition~\ref{prop6.1} gives
\[
\Spc\Perf(RB_{n,q})
\cong
\Spec R\times\operatorname{Sub}(\mathbb F_q^n),
\]
where the subspaces are ordered by inclusion.
\end{example}

\subsection{Oriented matroids}
There is a second band associated with a matroid.  A \emph{sign vector} on $E$ is a function $X:E\to\{0,+,-\}$, denoted by $X=(X_e)_{e\in E}$.
Let $\{0,+,-\}^{E}$ be the set of all sign vectors on $E$ and let $-X$ be obtained from $X$ by interchanging $+$ and $-$. For sign vectors $X,Y$, define their \emph{composition} by
\[
 (X\circ Y)_e=
 \begin{cases}
 X_e,&X_e\neq 0,\\
 Y_e,&X_e=0,
 \end{cases}
\]
and their \emph{separation set} by \(S(X,Y)=\{e\in E:X_e=-Y_e\neq 0\}.\) An \emph{oriented matroid} on $E$ is a pair $\mathcal M=(E,\mathcal V^*)$, where
$\mathcal V^*\subseteq\{0,+,-\}^{E}$ satisfies the covector axioms:
\begin{enumerate}
\item $\mathbf 0\in\mathcal V^*$;
\item $X\in\mathcal V^*$ implies $-X\in\mathcal V^*$;
\item $X,Y\in\mathcal V^*$ implies $X\circ Y\in\mathcal V^*$;
\item if $X,Y\in\mathcal V^*$ and $e\in S(X,Y)$, then there
exists $Z\in\mathcal V^*$ such that $Z_e=0$ and \(Z_f=(X\circ Y)_f=(Y\circ X)_f\) for every \(f\notin S(X,Y).\)
\end{enumerate}
The elements of $\mathcal V^*$ are called \emph{covectors}. Their zero sets
\[
 Z(X)=\{e\in E:X_e=0\},\qquad X\in\mathcal V^*,
\]
are precisely the flats of an ordinary matroid on $E$, denoted $\underline{\mathcal M}$ and called the \emph{underlying matroid} of $\mathcal M$. For more details on oriented matroids, see \cite{HSX25,NPS02}.

We denote by $B_{\mathrm{cov}}(\mathcal M)$ the monoid  $\mathcal V^*$, whose multiplication is given by composition, and whose identity is $\mathbf 0$. By \cite[Section~3.2.1]{MSS}, $B_{\mathrm{cov}}(\mathcal M)$ is a finite left regular band.

\begin{proposition}\label{prop6.4}
Let $R$ be a commutative Noetherian ring and $\mathcal M$ an oriented matroid. Then we have \(\Spc\Perf\bigl(RB_{\mathrm{cov}}(\mathcal M)\bigr)\cong\Spec R\times\operatorname{Flats}(\underline{\mathcal M})^{\op}.\)
\end{proposition}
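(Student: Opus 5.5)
The plan is to apply Theorem~\ref{thm3.7}, which reduces everything to identifying the support lattice $L_B$ for $B=B_{\mathrm{cov}}(\mathcal M)$, as a poset with its down-set topology, with $\operatorname{Flats}(\underline{\mathcal M})^{\op}$. The natural candidate is the zero-set map: send $\sigma(X)=BX\in L_B$ to the flat $Z(X)$. This is the same pattern as Proposition~\ref{prop6.1}, except that the order is reversed.

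\textbf{Step 1: $\sigma(X)\leq\sigma(Y)$ if and only if $Z(Y)\subseteq Z(X)$.} By Recollection~\ref{rec2.4}, $\sigma(Y)\leq\sigma(X)$ holds if and only if $X\circ Y=X$. Now $X\circ Y=X$ means that $Y_e=0$ for every $e$ with $X_e=0$, that is, $Z(X)\subseteq Z(Y)$. Exchanging the roles of $X$ and $Y$ gives the claim. In particular, $\sigma(X)=\sigma(Y)$ exactly when $Z(X)=Z(Y)$, so $\sigma(X)\mapsto Z(X)$ is well defined and injective on $L_B$.

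\textbf{Step 2: bijectivity and order.} Surjectivity onto $\operatorname{Flats}(\underline{\mathcal M})$ holds because, by definition of the underlying matroid, the flats are precisely the zero sets of covectors. Together with Step~1, this shows that the map is an order isomorphism $L_B\cong\operatorname{Flats}(\underline{\mathcal M})^{\op}$, where flats are ordered by inclusion and then reversed. As a check, the identity $\mathbf 0$ has $Z(\mathbf 0)=E$ and $\sigma(\mathbf 0)=B$, which is the minimum of $L_B$ under reverse inclusion; correspondingly $E$ is the maximal flat, hence the minimum in the opposite order.

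\textbf{Step 3: topology and conclusion.} An order isomorphism carries downward-closed subsets to downward-closed subsets, so it is a homeomorphism for the topologies whose closed sets are the down-sets. Taking the product with $\Spec R$ and composing with the homeomorphism of Theorem~\ref{thm3.7} gives the stated homeomorphism.

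The only delicate point is the direction of the order in Step~1, since reverse inclusion is used twice: once in the definition of $L_B$ and once in the criterion $bc=b\Leftrightarrow\sigma(c)\leq\sigma(b)$. I expect no further obstacle.
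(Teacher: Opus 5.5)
Your proof is correct and follows the paper's argument. Like the paper, you identify $L_B$ with $\operatorname{Flats}(\underline{\mathcal M})^{\op}$ via the zero-set map, using $X\circ Y=X\iff Z(X)\subseteq Z(Y)$ together with $bc=b\iff\sigma(c)\le\sigma(b)$, and then apply Theorem~\ref{thm3.7}; you also spell out surjectivity and the transfer of the down-set topology, which the paper leaves implicit.
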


\begin{proof}

Under our join convention, the zero-set map identifies the support lattice of $B_{\mathrm{cov}}(\mathcal M)$ with $\operatorname{Flats}(\underline{\mathcal M})^{\op}$.
Indeed, by definition, we have \(X\circ Y=X\) if and only if \(Z(X)\subseteq Z(Y).\) Thus two covectors have the same support exactly when they
have the same zero set, and the support order corresponds to reverse inclusion of zero sets. Then the homeomorphism  follows from Theorem~\ref{thm3.7}.
\end{proof}

\subsection{Free partially commutative bands of graphs}

Let $\Gamma=(V,E_\Gamma)$ be a finite simple graph, that is, a finite undirected graph with no loops or multiple edges. Write
\[
F(V)=\{v_1\cdots v_r:0\le r\le |V|,\ v_i\in V,\
v_i\ne v_j\text{ whenever }i\ne j\}.
\]
For $u=u_1\cdots u_r$ and $v=v_1\cdots v_s$, define
\[
u\cdot v:=u_1\cdots u_rv_{a_1}\cdots v_{a_s},\] where $v_{a_i}=\emptyset$ if $v_i=u_j$ for some $j$, and $v_{a_i}=v_i$ otherwise. Define
\[
 B(\Gamma)=F(V)/\langle xy=yx:(x,y)\in E_\Gamma\rangle,
\]
where the quotient is by the monoid congruence generated by these relations.  This is a finite left regular band with support lattice \(2^V\), by
\cite[Section~3.6]{MSS}. Then the following result is immediate.

\begin{proposition}\label{prop6.5}
Let $R$ be a commutative Noetherian ring and $\Gamma=(V,E_\Gamma)$  a finite simple graph. Then we have \(\Spc\Perf(RB(\Gamma))\cong\Spec R\times(2^V).\)
\end{proposition}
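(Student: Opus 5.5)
The plan is to apply Theorem~\ref{thm3.7} directly. Then the only work is to identify the support lattice $L_{B(\Gamma)}$, with its order and topology, with the Boolean lattice $2^V$. The space $2^V$ carries the topology whose closed sets are the downward-closed subsets for the order transported from $L_{B(\Gamma)}$.

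First I will define the content map $c\colon B(\Gamma)\to 2^V$, sending the class of a word $v_1\cdots v_r$ to its set of letters $\{v_1,\ldots,v_r\}$. This is well defined on the quotient because the commutation relations $xy=yx$ preserve the letter set. It is a monoid homomorphism into $(2^V,\cup)$, since the product $u\cdot v$ appends exactly the letters of $v$ not already in $u$. It is surjective, since every subset of $V$ is the letter set of some repetition-free word.

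Next I will show that $c$ induces the support map, i.e.\ that $\sigma(b)=\sigma(b')$ if and only if $c(b)=c(b')$, and that $\sigma(b)\le\sigma(b')$ if and only if $c(b)\subseteq c(b')$. By Recollection~\ref{rec2.4}, $\sigma(c')\le\sigma(b)$ holds exactly when $bc'=b$. The product $b\cdot c'$ equals $b$ followed by the letters of $c'$ not occurring in $b$. So if $c(c')\subseteq c(b)$, nothing is appended and $bc'=b$. Conversely, if some letter of $c'$ is missing from $b$, then $c(bc')\supsetneq c(b)$, so $bc'\ne b$ in the quotient, because $c$ is well defined on classes.

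Hence $\sigma(b)\mapsto c(b)$ is an order isomorphism $L_{B(\Gamma)}\cong(2^V,\subseteq)$, with join corresponding to union. This is exactly the identification cited from \cite[Section~3.6]{MSS}, so I may simply invoke it. The order isomorphism transports the downward-closed topology, and Theorem~\ref{thm3.7} gives $\Spc\Perf(RB(\Gamma))\cong\Spec R\times 2^V$.

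The only subtle point is checking that the content map separates distinct supports on the quotient monoid. This follows from the fact that $c$ is defined on the quotient, so there is no real obstacle.
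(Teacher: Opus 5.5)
Your proposal is correct and follows the paper's route: the paper simply cites \cite[Section~3.6]{MSS} for the identification $L_{B(\Gamma)}\cong 2^V$ and then applies Theorem~\ref{thm3.7}. Your content-map argument gives a valid self-contained check of that cited identification. It covers well-definedness on the quotient and the equivalence $bc'=b \iff c(c')\subseteq c(b)$, and it pins down the order direction, which is in any case harmless because $2^V$ is self-dual under complementation.
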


\begin{remark}
Let $\Gamma=(V,E_\Gamma)$ be a  finite simple graph. Then by Theorem~\ref{thm3.6} and Proposition~\ref{prop6.5}, the classification of localizing ideals of $D(RB(\Gamma))$  depends only on the vertex set $V$, not on the edge set $E_\Gamma$.
\end{remark}

\subsection{Acyclic quivers}

Let $R$ be a commutative Noetherian ring and $Q=(Q_0,Q_1)$ a finite acyclic quiver. For each vertex $v\in Q_0$, write $\varepsilon_v$ for the stationary path of length 0 with source
and target equal to $v$ . A  path $p=\alpha_1\cdots\alpha_m$ is a sequence of arrows satisfying $t(\alpha_i)=s(\alpha_{i+1})$ for
$1\le i<m$; its source and target are $s(p)=s(\alpha_1)$ and $t(p)=t(\alpha_m)$.

For  paths $p=\alpha_1\cdots\alpha_m$ and $q=\beta_1\cdots\beta_n$, multiplication in the path algebra $RQ$ is given by
\[
pq=
\begin{cases}
\alpha_1\cdots\alpha_m\beta_1\cdots\beta_n,
  &t(p)=s(q),\\
0, &t(p)\ne s(q).
\end{cases}
\]
In particular, $\varepsilon_{s(\alpha)}\alpha=\alpha=\alpha\varepsilon_{t(\alpha)}$. Choose a total order $\prec$ on $Q_0$ such that $s(\alpha)\prec t(\alpha)$ for every arrow $\alpha$. For a  path $p=\alpha_1\cdots\alpha_m$, define in $RQ$
\[
\ell(p)=\sum_{u\succeq t(p)}\varepsilon_u
       +\sum_{i=1}^{m}\alpha_i\cdots\alpha_m
\]
and write $B_Q^\prec:=\{\ell(p):p\text{ is a path of }Q\}$. By \cite[Theorem~3.10]{MSS}, $B_Q^\prec$ is a finite left regular band such that $RB_Q^\prec=RQ$.

Through this algebra isomorphism, there exists a  diagonal band tensor product on $\Perf(RQ)$, denoted by  $\otimes_{\mathrm{band}}^{\mathbf L}$.
Recall that the usual tensor product $\otimes_{\mathrm{vert}}^{\mathbf L}$ of quiver representations is defined vertexwise, see \cite{Sabatini}. The following result shows that these two tensor structures can  have non-homeomorphic Balmer spectra.

\begin{theorem}\label{thm6.7}
Let $R$ be a commutative Noetherian ring and $Q=(Q_0,Q_1)$ a finite acyclic quiver. Then the  triangulated category $\Perf(RQ)$ carries tensor products with spectra
\begin{align*}
 \Spc(\Perf(RQ),\otimes_{\mathrm{band}}^{\mathbf L})
   &\cong\Spec R\times(Q_0,\prec),\\
 \Spc(\Perf(RQ),\otimes_{\mathrm{vert}}^{\mathbf L})
   &\cong\Spec R\times Q_0^{\mathrm{disc}}.
\end{align*}
Moreover, $D(RQ)$ satisfies the telescope conjecture for both tensor products.
\end{theorem}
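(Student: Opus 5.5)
The band part follows from Theorem~\ref{thm3.7} once we identify the support lattice $L_{B_Q^\prec}$ with $(Q_0,\prec)$ as a poset. The vertex part is Sabatini's computation. The telescope conjecture then comes from Theorem~\ref{thm5.3} for the band structure and from \cite{Sabatini} for the vertexwise structure.

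\textbf{Band structure.} First I compute supports in $B=B_Q^\prec$. For a path $p$ put $v=t(p)$; I expect $\sigma(\ell(p))$ to depend only on $v$. The element $\ell(p)$ is the identity on the components $\varepsilon_u$ with $u\succeq v$, modulo path terms. The key identity to check by direct multiplication in $RQ$ is
\[
\ell(p)\ell(q)=\ell(p) \iff t(q)\succeq t(p).
\]
Heuristically, $\ell(q)$ fixes every summand supported at vertices $\succeq t(q)$. Every path term of $\ell(p)$ ends at $t(p)$, and $\sum_{u\succeq t(p)}\varepsilon_u$ only involves vertices $\succeq t(p)$. So right multiplication by $\ell(q)$ returns $\ell(p)$ exactly when $t(p)\succeq t(q)$. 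I must pin down the direction so that it matches the convention $bc=b\iff\sigma(c)\le\sigma(b)$ of Recollection~\ref{rec2.4}. This is also where I would rely on the explicit description in \cite[Theorem~3.10]{MSS}, which states that the support lattice of $B_Q^\prec$ is a chain indexed by $Q_0$.

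Granting this, $\sigma(\ell(p))=\sigma(\ell(q))$ if and only if $t(p)=t(q)$. The vertices are realized by the stationary paths $\varepsilon_v$, so the map $L_B\to Q_0$ sending $\sigma(\ell(p))$ to $t(p)$ is a bijection. It is order-preserving, possibly after reversing $\prec$, which is harmless because the reversed order is again a total order. Since $L_B$ is a chain, its downward-closed topology is the one described in Theorem~1.1. Theorem~\ref{thm3.7} then gives the first homeomorphism.

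\textbf{Vertexwise structure.} Here the unit is the constant representation and the tensor is vertexwise. I would cite Sabatini \cite{Sabatini}, who shows that $\Spc$ is $\Spec R\times Q_0$ with the discrete topology on $Q_0$. The sketch of that argument runs as follows:
\begin{itemize}
\item the evaluation functors $X\mapsto X_v$ are strong monoidal, so they give the support $\bigcup_v \supp_R(X_v)\times\{v\}$;
\item thick ideals are generated by $S_v\otimes K(\mathfrak p)$, where $S_v$ is the simple at $v$, and $S_v$ is perfect because $Q$ is acyclic;
\item since $S_v\otimes S_w=0$ for $v\ne w$, there is no specialization between different vertices.
\end{itemize}
Balmer's classifying support datum theorem then gives the spectrum. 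The two spaces are generally non-homeomorphic; for instance, with $R$ a field and $|Q_0|\ge2$, one is a non-discrete chain and the other is discrete.

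\textbf{Telescope conjecture.} For $\otimes_{\mathrm{band}}$ this is Theorem~\ref{thm5.3} applied to $B_Q^\prec$. For $\otimes_{\mathrm{vert}}$ I would invoke Sabatini's classification of smashing ideals, or repeat the proof of Theorem~\ref{thm5.3} with $e_v=\varepsilon_v$ and discrete $Q_0$.

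\textbf{Main obstacle.} The main obstacle is the computation of $\ell(p)\ell(q)$ and keeping the order conventions consistent. I would first check it on the $A_2$ quiver $1\to2$, where $B=\{1=\ell(\varepsilon_1),\ell(\varepsilon_2)=\varepsilon_2,\ell(\alpha)=\varepsilon_2+\alpha\}$. Since $\ell(\varepsilon_1)=\varepsilon_1+\varepsilon_2=1$, the support lattice has two elements, the top being $\sigma(1)=B$.
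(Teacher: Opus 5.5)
Your proposal follows the paper's proof: identify $L_{B_Q^\prec}$ with the chain $(Q_0,\prec)$ as in \cite[Theorem~3.10]{MSS} and apply Theorem~\ref{thm3.7}, cite \cite[Theorem~2.2]{Sabatini} for the vertexwise spectrum, and get the telescope conjecture from Theorem~\ref{thm5.3} and from Sabatini. For the vertexwise telescope conjecture the paper applies \cite[Theorem~4.4]{Sabatini} to tensor $t$-structures, after observing that a smashing ideal gives a stable, homotopically smashing one. Your fallback also works: rerun Lemma~\ref{lem5.2} and Theorem~\ref{thm5.3} with $\varepsilon_v$ and the simples $S_v$, using the vertexwise analogue of Theorem~\ref{thm3.6}.

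One correction: your displayed identity is reversed. In fact $\ell(p)\ell(q)=\ell(p)\iff t(p)\succeq t(q)$, exactly as your own heuristic shows. Also, since $L_B$ is ordered by reverse inclusion, $\sigma(1)=B$ is the bottom of $L_B$, not the top. Hence $\sigma(\ell(p))\mapsto t(p)$ is an order isomorphism onto $(Q_0,\prec)$ and no reversal is needed (for the abstract homeomorphism, any two finite chains of equal size would do anyway).
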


\begin{proof}
As in the proof of \cite[Theorem~3.10]{MSS}, $(Q_0,\prec)$ is isomorphic to the support lattice of $B_Q^\prec$, so the first homeomorphism follows from the identity $RB_Q^\prec=RQ$ and Theorem~\ref{thm3.7}.  The second homeomorphism is \cite[Theorem~2.2]{Sabatini}.  By Theorem~\ref{thm5.3}, $(D(RQ),\otimes_{\mathrm{band}}^{\mathbf L})$ satisfies the telescope conjecture. To show $(D(RQ),\otimes_{\mathrm{vert}}^{\mathbf L})$ satisfies the telescope conjecture, it suffices to apply \cite[Theorem~4.4]{Sabatini} to stable tensor
$t$-structures. Indeed, the right orthogonal of a smashing localizing subcategory is triangulated and closed under coproducts, hence under directed derived homotopy colimits, so this stable case is homotopically smashing.
\end{proof}

\begin{example}\label{ex6.8}
Let $k$ be a field and $Q=(1\xrightarrow{\alpha}2)$ . Then  $\Spc(\Perf(kQ),\otimes_{\mathrm{band}}^{\mathbf L})$ is a two-point chain, while $\Spc(\Perf(kQ),\otimes_{\mathrm{vert}}^{\mathbf L})$  is a discrete two-point space.
\end{example}

\vspace{4mm}

\textbf{Peng Xu}\\
School of Mathematics, Nanjing University,
Nanjing 210093, P. R. China;\\
E-mail: \textsf{602023210017@smail.nju.edu.cn}\\[1mm]

\end{document}